\newtheorem*{thm*}{Theorem}
\newtheorem{thm}{Theorem}[section]
\newtheorem{lem}[thm]{Lemma}
\newtheorem{cor}[thm]{Corollary}
\theoremstyle{remark}
\newtheorem{question}[thm]{Question}
\newtheorem*{rem*}{Remark}
\newcommand{\NN}{\mathbb{N}}
\newcommand{\ZZ}{\mathbb{Z}}
\newcommand{\QQ}{\mathbb{Q}}
\newcommand{\RR}{\mathbb{R}}
\newcommand{\CC}{\mathbb{C}}
\newcommand{\PP}{\mathbb{P}}
\newcommand{\B}{\mathcal{B}}
\newcommand{\Prob}{\mathbb{P}}
\newcommand{\Expectation}{\mathbb{E}}
\DeclareMathOperator{\Var}{Var}
\newcommand{\dd}[1]{\mathop{\mathrm{d}#1}}
\NewDocumentCommand\e{ s O{} m }{%
	\IfBooleanTF{#1}{%
		\operatorname{e}_{#2}\parentheses[\big]{#3}%
	}{\operatorname{e}_{#2}\parentheses{#3}}%
}
\newcommand{\meas}{\operatorname{meas}}
\newcommand{\id}{\operatorname{id}}
\newcommand{\LandauO}{O}
\newcommand{\exceptionalF}{f_0}
\newcommand{\indic}{\boldsymbol{1}}
\DeclarePairedDelimiter\parentheses{\lparen}{\rparen}
\DeclarePairedDelimiter\braces{\lbrace}{\rbrace}
\DeclarePairedDelimiter\brackets{\lbrack}{\rbrack}
\DeclarePairedDelimiter\abs{\lvert}{\rvert}
\DeclarePairedDelimiter\norm{\lVert}{\rVert}
\DeclarePairedDelimiter\floor{\lfloor}{\rfloor}
\DeclarePairedDelimiter\lopeninterval{\lparen}{\rbrack}
\NewDocumentCommand\set{ s o m }{\IfBooleanTF{#1}{\braces*{\,#3\,}}{\IfNoValueTF{#2}{\braces{\,#3\,}}{\braces[#2]{\,#3\,}}}}
\numberwithin{equation}{section}
\newcounter{@ToDo}
\newcommand{\todo@helper}[1]{%
	({\color{blue}TODO~\arabic{@ToDo}: {#1\@addpunct{.}}})%
}
\newcommand{\todo}[1]{\stepcounter{@ToDo}%
	\relax\ifmmode\text{\todo@helper{#1}}%
	\else\todo@helper{#1}\fi%
}
\title{Metric results on summatory arithmetic functions on Beatty sets}
\date{\today{}}
\subjclass[2010]{
	Primary
	11B83; 
	Secondary
	11J83, 
	11K65.
}
\keywords{Beatty sequence, Beatty set, arithmetic function, metric theory}
\author{Marc~Technau}
\address{
	Marc~Technau\\%
	Institut für Analysis und Zahlentheorie\\%
	TU~Graz\\%
	Kopernikusgasse~24/II\\%
	8010~Graz\\%
	Austria}
\email{mtechnau@math.tugraz.at}
\author{Agamemnon~Zafeiropoulos*}
\thanks{*Research supported by Austrian Science Fund~(FWF) projects~{F-5512} and~{Y-901}.} 
\address{
	Agamemnon~Zafeiropoulos\\%
	Department of Mathematical Sciences\\%
	Norwegian University of Science and Technology\\%
	NO-7491~Trondheim\\%
	Norway}
\email{agamemnon.zafeiropoulos@ntnu.no}
\begin{document}
\begin{abstract}
	Let $f\colon \NN\rightarrow \CC$ be an arithmetic function and consider the Beatty set $\B(\alpha) = \set{ \floor{n\alpha} : n\in\NN }$ associated to a real number $\alpha$, where $\floor{\xi}$ denotes the integer part of a real number $\xi$.
	We show that the asymptotic formula
	\[
		\abs[\Big]{
			\sum_{\substack{ 1\leq m\leq x \\ m\in \B(\alpha) }} f(m)
			- \frac{1}{\alpha} \sum_{1\leq m\leq x} f(m)
		}^2
		\ll_{f,\alpha,\varepsilon}
		(\log x)
		(\log\log x)^{3+\varepsilon}
		\sum_{1\leq m\leq x} \abs{f(m)}^2
	\]
	holds for almost all $\alpha>1$ with respect to the Lebesgue measure.
	This significantly improves an earlier result due to Abercrombie, Banks, and Shparlinski.
	The proof uses a recent Fourier-analytic result of Lewko and {Radziwi\l\l} based on the classical Carleson--Hunt inequality.\\
	Moreover, using a probabilistic argument, we establish the existence of functions $f\colon\NN\to\set{\pm 1}$ for which the above error term is optimal up to logarithmic factors.
\end{abstract}
\maketitle

\section{Introduction}

For a real number $\alpha\geq 1$ consider the associated \emph{Beatty set}\footnote{%
	The term \emph{Beatty sequence} is also commonly used, but we choose to refer to them as sets here, because we do not take advantage of their order-theoretic properties.%
} $\B(\alpha) = \set{ \floor{n\alpha} : n\in\NN }$, where $\floor{\xi} = \min\set{ m\in\ZZ : m\leq \xi }$ denotes the \emph{integer part} of a real number $\xi$.
Such sets may be viewed as a generalisation of arithmetic progressions.
Indeed, consecutive elements in the Beatty set $\B(\alpha)$ are spaced either $\floor{\alpha}$ or $\floor{\alpha}+1$ apart, and arithmetic progressions $a$, $2a$, $3a$, \ldots{} (with positive integer~$a$) are included as a special case.

In this article we are concerned with the following striking metric result on summatory functions on Beatty sets which was obtained by Abercrombie, Banks, and Shparlinski~\cite{abercrombie2009arithm} in~2008:
\begin{thm}[Abercrombie--Banks--Shparlinski]
	\label{thm:AbercrombieBanksShparlinski}
	Fix $\varepsilon>0$.
	Let $f\colon\NN\to\CC$ be any arithmetic function, and for $x\geq 1$ put
	\begin{gather}
		\label{eq:Mfx:Def}
		M(f,x) = 1 + \!\! \max_{1\leq m\leq x} \abs{f(m)}, \\
		\label{eq:Sfx:Def}
		S_{\alpha}(f,x) = \sum_{\substack{ 1\leq m\leq x \\ m\in \B(\alpha) }} f(m),
		\quad
		S(f,x) = \sum_{1\leq m\leq x} f(m)
	\end{gather}
	Then, for all $\alpha>1$ in a set depending on~$\varepsilon$ and~$f$, having full Lebesgue measure, one has
	\begin{equation}\label{eq:AsymptFormula}
		S_{\alpha}(f,x)
		= \alpha^{-1} S(f,x)
		+ \LandauO_{f,\alpha,\varepsilon}\parentheses*{ x^{\frac{3}{4}+\varepsilon} M(f,x) },
	\end{equation}
	where the implicit constant does not depend on~$x$.
\end{thm}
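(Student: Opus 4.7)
The plan is to reduce the theorem to a mean-square estimate in~$\alpha$ for the error
\[
E_\alpha(x) := S_\alpha(f,x) - \alpha^{-1} S(f,x),
\]
and then pass to an almost-sure pointwise bound via Chebyshev's inequality, Borel--Cantelli, and a gap-bridging step.  First I would rewrite the indicator of $\B(\alpha)$ in terms of fractional parts: for $\alpha > 1$, every half-open interval $[m/\alpha, (m+1)/\alpha)$ contains at most one integer, hence
\[
\indic_{\B(\alpha)}(m)
= \floor{(m+1)/\alpha} - \floor{m/\alpha}
= \frac{1}{\alpha} + \psi(m/\alpha) - \psi((m+1)/\alpha),
\]
where $\psi(t) := t - \floor{t}$.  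Consequently
\[
E_\alpha(x) = \sum_{1 \leq m \leq x} f(m) \bigl(\psi(m/\alpha) - \psi((m+1)/\alpha)\bigr),
\]
and the entire argument is organised around this identity.

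\textbf{The mean-square estimate.}
The heart of the proof is a polynomial bound of the form
\[
\int_I \abs{E_\alpha(x)}^2 \dd\alpha
\ll_{I,\varepsilon} x^{\sigma + \varepsilon} M(f,x)^2
\qquad(I \subset (1,\infty) \text{ compact}),
\]
for an exponent~$\sigma$ as small as possible.  To prove it, I would replace $\psi$ by a truncated Fourier series (of degree~$H$, a small power of~$x$), thereby reducing the mean square to double sums of exponential integrals of the form
\[
\int_I e^{2\pi i (km - k'm')/\alpha} \dd\alpha,
\qquad 1 \leq \abs{k}, \abs{k'} \leq H, \quad 1 \leq m, m' \leq x.
\]
Each such integral is handled by integration by parts, saving a factor proportional to $\abs{km - k'm'}^{-1}$ off the diagonal $km = k'm'$.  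Separating the diagonal contribution (which is $\ll x M(f,x)^2$ in view of the trivial bound $\abs{\psi(m/\alpha) - \psi((m+1)/\alpha)} \leq 1$), carefully summing the remaining pieces over $k, k', m, m'$, and absorbing the tail of the Fourier approximation yields the desired estimate.

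\textbf{Pointwise passage.}
Granted the mean-square bound, Chebyshev's inequality yields
\[
\meas\braces*{\alpha \in I : \abs{E_\alpha(x)} > x^{\rho} M(f,x)}
\ll_{I,\varepsilon} x^{\sigma + \varepsilon - 2\rho}
\qquad(\rho > 0).
\]
I would then apply Borel--Cantelli along a polynomial sequence $x_j \asymp j^A$; the spacing $x_{j+1} - x_j \asymp x_j^{1-1/A}$ enters through the trivial gap-bridging estimate
\[
\abs{E_\alpha(x) - E_\alpha(x_j)} \ll (x_{j+1} - x_j) M(f, x_{j+1}).
\]
Bridging requires $A \leq 1/(1-\rho)$ and summability requires $A(2\rho - \sigma - \varepsilon) > 1$; these are simultaneously satisfiable precisely when $\rho > (1+\sigma+\varepsilon)/3$.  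Calibrating the second-moment step so that this threshold lies below~$3/4$ yields the exponent $3/4 + \varepsilon$ in the conclusion, and an exhaustion of $(1,\infty)$ by countably many compact intervals~$I$ completes the proof.

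\textbf{The main obstacle.}
The decisive difficulty is the interplay between the second moment exponent~$\sigma$ and the Borel--Cantelli/bridging trade-off: at each fixed scale one has a natural $\sqrt{x}$-type bound implicit in the mean square, but bridging introduces a structural loss whose size is governed by~$\sigma$.  The technical challenge is to make $\sigma$ small enough that the compatibility threshold $(1+\sigma)/3$ falls below $3/4$, which requires careful accounting of the tail of the Fourier approximation, the diagonal term, and the oscillatory cancellation in the phases $c/\alpha$.  This bridging loss is precisely the structural limitation that the paper's main theorem bypasses: invoking the Lewko--Radziwi\l\l{} inequality founded on the Carleson--Hunt maximal inequality supplies direct control over $\max_{y \leq x} \abs{E_\alpha(y)}$, eliminates the Borel--Cantelli step entirely, and produces the near-optimal logarithmic bound announced in the abstract.
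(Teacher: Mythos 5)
First, note that \cref{thm:AbercrombieBanksShparlinski} is quoted from Abercrombie--Banks--Shparlinski and is not reproved in this paper; all the paper offers is the informal outline in the section ``Where the improvement comes from.'' Your proposal reconstructs essentially that outline --- sawtooth detection of membership in $\B(\alpha)$, truncation of the Fourier series, a mean-square bound in the $\alpha$-aspect from the diagonal plus an integration-by-parts saving off the diagonal, then Chebyshev, Borel--Cantelli along a sparse sequence, and trivial bridging --- so in spirit you are following the same route as the original source.

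As a proof, however, the proposal has a genuine gap: the ``heart of the proof,'' the mean-square estimate with exponent $\sigma$, is only described, never established, and you never commit to a value of $\sigma$. The delicate point is precisely the one you wave away with ``absorbing the tail of the Fourier approximation'': controlling $\sum_{m\leq x} f(m)\bigl(\psi(m/\alpha)-\psi_H(m/\alpha)\bigr)$ in mean square forces a choice of the truncation length $H$ against the size of the resulting trigonometric polynomial, and this is where the exponent is actually decided (it occupies most of p.~87 of the original paper). Moreover, your final calibration is internally inconsistent: the paper records the bound $\int_0^1\abs{Q_{\lambda^{-1}}(f,x)}^2\dd{\lambda}\ll x(\log x)^3M(f,x)^2$ for the truncated object \cref{eq:Qfx:def}, i.e.\ $\sigma=1$ up to logarithms, and with $\sigma=1$ your own threshold $\rho>(1+\sigma+\varepsilon)/3$ gives $2/3$, which would prove something \emph{stronger} than \cref{eq:AsymptFormula}; if instead the truncation tail forces $\sigma>5/4$, the threshold exceeds $3/4$ and you prove something weaker. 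So ``calibrating\ldots{} yields the exponent $3/4+\varepsilon$'' is an assertion rather than a derivation, and the one number the theorem is about is never actually produced. A smaller correction: the paper's improvement does not ``eliminate the Borel--Cantelli step entirely''; \cref{lem:SigmaBound} is still Chebyshev plus Borel--Cantelli over dyadic blocks. What the Lewko--Radziwi\l\l{} inequality (\cref{thm:Lewko-Radziwill}) removes is the lossy bridging between test points, since the maximum over $1\leq x\leq X$ already sits inside the $L^2$ bound.
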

Here and throughout we use the Landau notation $g(x) = \LandauO(h(x))$ and the Vinogradov notation $g(x) \ll h(x)$ to mean that there exists some constant $C>0$ such that $\abs{g(x)} \leq C h(x)$ for all possible values of~$x$. Whenever the implicit constant $C>0$ depends on some parameters $\delta_1,\ldots,\delta_k$ we indicate this via subscripts, e.g., $g(x)=\LandauO_{\delta_1,\ldots,\delta_k}(h(x))$.
We write $g(x) \sim h(x)$ as $x\to \infty$ to mean\footnote{When there are more parameters involved, convergence is not necessarily uniform with respect to these parameters. } $\lim_{x\to\infty} g(x) / h(x) = 1$, while we write $g(x) \nsim h(x)$ to mean that $g(x) \sim h(x)$ does not hold.

\medskip

We pause for a moment to discuss why \cref{thm:AbercrombieBanksShparlinski} may be surprising.
Consider, for instance, the characteristic function $\indic_\PP$ of the primes.
Then, by the prime number theorem, $S(\indic_\PP,x) \sim x/\log x$ as $x\to\infty$. However, trivially, the Beatty set $\B(a)$ with integral~$a>1$ contains at most one prime, so that $S_a(\indic_\PP,x) = \LandauO(1)$, which shows a huge disparity between $S_a(\indic_\PP,x)$ and $a^{-1} S(\indic_\PP,x)$.\\
For rational $a/q$ ($a,q$ coprime), it is easy to see that $\B(\alpha)$ is the union of arithmetic progressions $\floor{na/q} + a \NN_0$, $1\leq n\leq q$.
Hence, by the prime number theorem for arithmetic progressions, the quest for evaluating $S_{a/q}(\indic_\PP,x)$ asymptotically pertains to how often $\floor{na/q}$ and $a$ happen to be coprime as $n$ ranges from~$1$ to~$q$.\\
For irrational $\alpha>1$ the situation changes completely; here one always has $S_{\alpha}(\indic_\PP,x) \sim \alpha^{-1} S(\indic_\PP,x)$ as $x\to\infty$ by results of Vinogradov \cite[Notes to Chapter~{XI}]{vinogradov2004themethod} (see also \cite[Chapter~{4.V}]{ribenboim1996new-book} and \cite{banks2009primenumbers}).

Motivated by the above, Abercrombie~\cite{abercrombie1995beatty-sequences} was lead to study the situation when $f$ equals the \emph{divisor function} $\tau = \tau_2$, where $\tau_k$ counts the number of representations of its argument as a product of $k$ positive integers.
He was able to show that, for every irrational~$\alpha>1$, $S_{\alpha}(\tau,x) \sim \alpha^{-1} S(\tau,x)$ as $x\to\infty$. Moreover, by constructing an uncountable set of counter-examples, he could show that one cannot in general improve the error term in the asymptotic formula $S_{\alpha}(\tau,x) = \alpha^{-1} S(\tau,x) + o_{\alpha}(S(\tau,x))$ as $x\to\infty$ (see \cref{thm:Abercrombie} below for the precise statement).\\
However, for almost all $\alpha>1$ with respect to the Lebesgue measure, he showed that $S_{\alpha}(\tau,x) = \alpha^{-1} S(\tau,x) + O_{\alpha,\varepsilon}(x^{5/7+\varepsilon})$.
This was later improved and generalised to higher divisor functions $\tau_k$ by Zhai~\cite{zhai1997a-note-on-a}, and L\"u and Zhai~\cite{lu2004-the-divisor-problem}. They have, for almost all $\alpha>1$,
\begin{equation}\label{eq:BeattyDivisorProblem}
	S_{\alpha}(\tau_k,x) = \alpha^{-1} S(\tau_k,x) +
	\begin{cases}
		O_{\alpha,\varepsilon}(x^{(k-1)/k+\varepsilon}) & \text{if } 2\leq k\leq 4, \\
		O_{k,\alpha,\varepsilon}(x^{4/5+\varepsilon}) & \text{if } k\geq 5,
	\end{cases}
\end{equation}
which, in turn, is improved upon by \cref{thm:AbercrombieBanksShparlinski} for $k\geq 4$.

We further remark that the literature contains a wealth of results showing that $S_{\alpha}(f,x) \sim \alpha^{-1} S(f,x)$ as $x\to\infty$ (possibly restricted to certain subsequences of the integers) for various functions $f$ and irrational $\alpha$.
The interested reader may wish to confer~\citep{banks2008density-of-non-residues, banks2006nonresidues, banks2006short-character-sums, banks2007prime-divisors, banks2009primenumbers, begunts2004on-an-analogue, begunts2005on-the-distribution, guloglu2008sums-of-multiplicative, lao2009oscillations}.

In this regard it may also be worth pointing out that \cref{thm:AbercrombieBanksShparlinski} yields good results for arithmetic functions like the divisor function, or Euler's totient, but may fail to provide useful information for certain other applications.
The reader may imagine wanting to study $S_{\alpha}(f,x)$, when $f=\chi$ is a Dirichlet character modulo~$q$ or something similar (see, for instance,~\cite{banks2006nonresidues,banks2006short-character-sums,banks2008density-of-non-residues}, or~\cite{technau2018modular-hyperbolas} for a similar situation).
Here the main focus usually lies in beating the trivial bound $\abs{S_{\alpha}(\chi,x)} \leq x$.
For such problems, the crux with formulas like~\cref{eq:AsymptFormula} lies in the fact that one often needs them to hold when $x$ is of comparable size with $q$ and, moreover, one would like to have some sort of uniformity when switching from one $\chi$ to another (possibly with different conductor).
We do not resolve this particular problem here, and it seems unlikely that this can be done in such a generality in which \cref{thm:AbercrombieBanksShparlinski} applies.

\section{Main results}

Returning to the above discussion of $S_{\alpha}(\tau_k,x)$, it seems natural to ask whether the error term in \cref{thm:AbercrombieBanksShparlinski} can be improved.
For an arithmetic function $f\colon\NN\to\CC$ we write $\norm{f\rvert_{x}}_2$ for the $\ell^2$ norm of $f$ restricted to $\set{ 1,\ldots,\floor{x} }$, i.e.,
\[
	\norm{f\rvert_{x}}_2^2 = \sum_{1\leq m\leq x} \abs{f(m)}^2.
\]
Our main result is as follows.
\begin{thm}\label{thm:OurVersion}
	Fix $\varepsilon>0$.
	Let $f\colon\NN\to\CC$ be any arithmetic function.
	Keep the notation from~\cref{eq:Mfx:Def} and~\cref{eq:Sfx:Def}, and assume that $f$ does not grow too quickly in the sense that
	\begin{equation}\label{eq:GrowthBound}
		\limsup_{r\to\infty} \frac{ \norm{f\rvert_{2^{r+1}}}_2 }{ \norm{f\rvert_{2^r}}_2 } < \infty.
	\end{equation}
	Then, for all $\alpha>1$ in a set of full Lebesgue measure (depending on $f$), one has
	\begin{equation}\label{mainequation}
		S_{\alpha}(f,x)
		= \alpha^{-1} S(f,x)
		+ \LandauO_{f,\alpha,\varepsilon}\parentheses*{
			\norm{f\rvert_{x}}_2
			(\log x)^{\frac{1}{2}}
			(\log\log x)^{\frac{3}{2}+\varepsilon}
		},
	\end{equation}
	uniformly for $x\geq 8$.\footnote{The constant~$8$ has no particular importance; it is just there to ensure $x$ is large enough for the double logarithm to make sense.}
\end{thm}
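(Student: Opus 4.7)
My plan is to reduce the theorem to an almost-sure maximal estimate for sums involving the sawtooth function, then invoke the Fourier-analytic maximal inequality of Lewko and Radziwi\l\l, and finally convert this into the stated uniform bound via dyadic decomposition and Borel--Cantelli. For any irrational $\alpha>1$ (a set of full measure), the interval $[m/\alpha,(m+1)/\alpha)$ has length $1/\alpha<1$, so it contains at most one integer, and contains $n$ exactly when $m=\floor{n\alpha}$. Hence $\indic_{\B(\alpha)}(m)=\floor{(m+1)\beta}-\floor{m\beta}$ with $\beta:=1/\alpha$, and writing $\floor{t}=t-\psi(t)-\tfrac12$ for $\psi(t):=\{t\}-\tfrac12$ yields the clean identity $\indic_{\B(\alpha)}(m)=\beta+\psi(m\beta)-\psi((m+1)\beta)$. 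Summing against $f(m)$ over $m\leq x$ gives $S_\alpha(f,x)-\alpha^{-1}S(f,x)=F_x(\beta)-G_x(\beta)$, where $F_x(\beta):=\sum_{m\leq x}f(m)\psi(m\beta)$ and $G_x(\beta):=\sum_{m\leq x}f(m)\psi((m+1)\beta)$. Reindexing $n=m+1$ rewrites $G_x$ as an $F_x$-type sum with shifted coefficients $\tilde f(n):=f(n-1)$, up to a boundary term $f(\floor{x})\psi((\floor{x}+1)\beta)$ of size $\ll\norm{f\rvert_x}_2$; so the whole proof reduces to establishing the a.e.\ bound $\abs{F_x(\beta)}\ll_{\beta,\varepsilon}\norm{f\rvert_x}_2\,(\log x)^{1/2}(\log\log x)^{3/2+\varepsilon}$.

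The key analytic input is the maximal inequality of Lewko and Radziwi\l\l, proved via Carleson--Hunt combined with sharp bounds for the GCD quadratic form $\sum_{m,n\leq N}a_m\overline{a_n}\gcd(m,n)^2/(mn)$; the latter equals $12\int_0^1\abs{\sum_{n\leq N}a_n\psi(n\beta)}^2\dd{\beta}$ by Parseval applied to the Fourier expansion of $\psi$. The maximal inequality takes the shape
\[
	\int_0^1 \sup_{N\leq x}\abs{F_N(\beta)}^2 \dd{\beta} \ll_\varepsilon \norm{f\rvert_x}_2^2\,(\log x)(\log\log x)^{3+\varepsilon},
\]
with the $(\log x)$-factor arising from the $1/k$-coefficients of $\psi$'s Fourier series and the $(\log\log x)^{3+\varepsilon}$ from the Carleson--Hunt-based GCD bounds. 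Crucially, one also has a localized form of this estimate when the coefficients are supported on a dyadic block $(x_{r-1},x_r]$ with $x_r:=2^r$: there the $\log$-range collapses to $\log(x_r/x_{r-1})=\log 2=O(1)$ and only a polylog factor $(\log\log x_r)^{O(1)}$ in the $\log\log$-scale survives.

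The a.e.\ bound is then obtained by a dyadic Borel--Cantelli argument. Let $\Lambda_r(\beta):=\sup_{N\leq x_r}\abs{F_N(\beta)}$, $\tilde\Lambda_r(\beta):=\sup_{x_{r-1}<N\leq x_r}\abs{F_N(\beta)-F_{x_{r-1}}(\beta)}$, and set $\mu_r:=\norm{f\rvert_{x_r}}_2\cdot r^{1/2}(\log r)^{3/2+\varepsilon}$. A direct Chebyshev estimate on the global inequality only gives $\Prob(\Lambda_r>K\mu_r)\ll K^{-2}(\log r)^{-\varepsilon}$, which is \emph{not} summable in~$r$; the localized inequality, however, yields $\norm{\tilde\Lambda_r}_{L^2}^2\ll(\log\log x_r)^{O(1)}\norm{f\rvert_{(x_{r-1},x_r]}}_2^2$, whence the Borel--Cantelli series acquires an extra factor of $1/r$ and becomes of the schematic form $\sum_r r^{-1}(\log r)^{-1-\eta}$, which converges. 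Telescoping via $\Lambda_r\leq\Lambda_{r-1}+\tilde\Lambda_r$, combined with direct single-point control of $F_{x_r}$ (whose $L^2$-norm is bounded uniformly by $C\norm{f\rvert_{x_r}}_2$ via the sharp $\alpha=1$ GCD-sum estimate), then yields $\Lambda_r(\beta)\ll_\beta\mu_r$ for almost every~$\beta$ and all large~$r$.

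For arbitrary $x\geq 8$ pick $r$ with $x_{r-1}\leq x\leq x_r$; then $\abs{F_x(\beta)}\leq\Lambda_r(\beta)$, while $\log x\asymp\log x_r$ and $\log\log x\asymp\log\log x_r$, and the growth hypothesis~\cref{eq:GrowthBound} enforces $\norm{f\rvert_{x_r}}_2\asymp\norm{f\rvert_x}_2$; this promotes the dyadic bound to a uniform one for all $x\geq 8$. Applying the identical argument to $G_x$ via its $F_x$-type reformulation and subtracting finishes the proof. The principal technical obstacle lies in the Borel--Cantelli step: the sharp $(\log x)^{1/2}(\log\log x)^{3/2+\varepsilon}$ cannot be obtained from a naive Chebyshev application to the global maximal estimate without losing a spurious factor of $(\log x)^{1/2+\delta}$, so one must invoke the localized Lewko--Radziwi\l\l\ inequality over dyadic blocks in concert with the sharp single-point GCD bound, and delicately balance the polylog exponents against the Borel--Cantelli weights so that summability holds for every $\varepsilon>0$.
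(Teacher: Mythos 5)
Your overall architecture---detecting membership in $\B(\alpha)$ via the sawtooth function, invoking a Lewko--Radziwi\l\l{} maximal inequality, and passing to an almost-everywhere bound by Chebyshev and Borel--Cantelli at dyadic scales---matches the paper's. But there is a genuine gap, and it originates in your statement of the key analytic input. You assert a maximal inequality of the shape
\[
	\int_0^1 \sup_{N\leq x}\abs{F_N(\beta)}^2 \dd{\beta} \ll_\varepsilon \norm{f\rvert_x}_2^2\,(\log x)(\log\log x)^{3+\varepsilon},
\]
attributing the $(\log x)$-factor to the $1/k$-decay of the Fourier coefficients of $\psi$. This is not what Lewko and Radziwi\l\l{} prove: their Theorem~5 (quoted as \cref{thm:Lewko-Radziwill}) bounds the maximal function over $1\leq x\leq X$ by $(\log\log X)^2\sum_{m\leq X}\abs{f_m}^2$, with \emph{no} power of $\log X$ whatsoever; the decay $c_j\ll(1+\abs{j})^{-1}$ is a hypothesis of their theorem, not the source of a logarithmic loss. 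With the correct inequality, the ``naive'' Chebyshev application you dismiss works immediately: at scale $X=2^r$ with threshold $r^{1/2}(\log r)^{3/2+\varepsilon}\norm{f\rvert_{2^r}}_2$, the exceptional set has measure $\ll(\log r)^2/\bigl(r(\log r)^{3+2\varepsilon}\bigr)=1/\bigl(r(\log r)^{1+2\varepsilon}\bigr)$, which is summable; Borel--Cantelli and the growth hypothesis~\cref{eq:GrowthBound} then give \cref{lem:SigmaBound} and hence~\cref{mainequation}. The factor $(\log x)^{1/2}$ in the theorem is purely the price of summability in $r\asymp\log x$, not a remnant of the maximal inequality.

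Having weakened the key input, you are driven to the localisation-plus-telescoping scheme, and as written it does not recover the stated exponent. Iterating $\Lambda_r\leq\Lambda_{r-1}+\tilde\Lambda_r$ gives $\Lambda_r\leq\Lambda_{r_0}+\sum_{s\leq r}\tilde\Lambda_s$, and any block thresholds $t_s$ large enough to make $\sum_s\meas(\tilde\Lambda_s>t_s)$ converge must satisfy $t_s\gg s^{1/2}(\log s)^{1/2+\eta}\norm{\tilde\Lambda_s}_{L^2}$; summing $\asymp r$ such terms then costs, for admissible $f$ whose dyadic block norms are comparable (e.g.\ $f(m)=m^{-1/2}$, which satisfies~\cref{eq:GrowthBound}), at least an extra factor of $r^{1/2}=(\log x)^{1/2}$ beyond the target. (A maximum over blocks, each controlled by $\abs{F_{x_{s-1}}}+\tilde\Lambda_s$, could be made to work, but that is not the inequality you telescope; also, your claim that the single-point $L^2$-norm of $F_{x_r}$ is $\ll\norm{f\rvert_{x_r}}_2$ uniformly is false---the sharp G\'al-type bound carries an unavoidable $(\log\log x_r)^2$.) The fix is simply to quote \cref{thm:Lewko-Radziwill} correctly and apply it once to the full range $1\leq x\leq X$; note also that no reindexing of your $G_x$ is needed, since the theorem permits the arbitrary strictly increasing sequence $k_m=m+1$.
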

We note that in applications the function $x\mapsto\norm{f\rvert_{x}}_2$ is generally well-behaved in the sense that it is $\asymp x^d (\log x)^k$ for some $d>0$ and $k\in\RR$ as $x\to\infty$. In such cases \cref{eq:GrowthBound} is immediately seen to be satisfied, so the condition~\cref{eq:GrowthBound} is quite mild.

For easier comparison with \cref{thm:AbercrombieBanksShparlinski}, we note that $\norm{f\rvert_{x}}_2 \leq \sqrt{x}\, M(f,x)$ where $M(f,x)$ is given by~\cref{eq:Mfx:Def}. Thus, \cref{thm:OurVersion} immediately implies the following result:
\begin{cor}
	Fix $\varepsilon>0$.
	Let $f\colon\NN\to\CC$ be any arithmetic function satisfying~\cref{eq:GrowthBound}.
	Keep the notation from~\cref{eq:Mfx:Def} and~\cref{eq:Sfx:Def}.
	Then, for all $\alpha>1$ in a set of full Lebesgue measure (depending on $f$), one has
	\[
		S_{\alpha}(f,x)
		= \alpha^{-1} S(f,x)
		+ \LandauO_{f,\alpha,\varepsilon}\parentheses*{
			x^{\frac{1}{2}+\varepsilon}
			M(f,x)
		},
	\]
	uniformly for $x\geq 8$.
\end{cor}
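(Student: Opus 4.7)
The plan is simply to invoke \cref{thm:OurVersion} and to bound the $\ell^2$-norm by the supremum norm. More precisely, for any arithmetic function $f$ and any $x \geq 1$ one has the trivial inequality
\[
	\norm{f\rvert_{x}}_2^2
	= \sum_{1\leq m\leq x} \abs{f(m)}^2
	\leq \floor{x}\, \max_{1\leq m\leq x}\abs{f(m)}^2
	\leq x\, M(f,x)^2,
\]
so that $\norm{f\rvert_{x}}_2 \leq \sqrt{x}\, M(f,x)$. Under the hypothesis~\cref{eq:GrowthBound} we may apply \cref{thm:OurVersion} with any positive parameter~$\varepsilon' > 0$ (to be fixed in a moment) to obtain, for almost all $\alpha>1$ with respect to Lebesgue measure and uniformly for $x\geq 8$,
\[
	S_\alpha(f,x)
	= \alpha^{-1} S(f,x)
	+ \LandauO_{f,\alpha,\varepsilon'}\parentheses*{
		\sqrt{x}\, M(f,x)\,
		(\log x)^{\frac{1}{2}}(\log\log x)^{\frac{3}{2}+\varepsilon'}
	}.
\]

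Given the target exponent $\varepsilon > 0$, it remains to absorb the two logarithmic factors into $x^{\varepsilon}$. This is harmless: for any fixed $\varepsilon' > 0$ one has
\[
	(\log x)^{\frac{1}{2}}(\log\log x)^{\frac{3}{2}+\varepsilon'}
	\ll_{\varepsilon} x^{\varepsilon}
	\qquad (x\geq 8),
\]
since any positive power of $\log x$ is dominated by any positive power of $x$. Fixing, say, $\varepsilon' = 1$ and combining the two displays yields the claimed bound
\[
	S_{\alpha}(f,x)
	= \alpha^{-1} S(f,x)
	+ \LandauO_{f,\alpha,\varepsilon}\parentheses*{x^{\frac{1}{2}+\varepsilon}\, M(f,x)}
\]
for almost all $\alpha>1$, uniformly for $x\geq 8$. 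There is no genuine obstacle here: the corollary is a direct consequence of \cref{thm:OurVersion} upon discarding the finer $\ell^2$ information and trading the sharp $(\log x)^{1/2}(\log\log x)^{3/2+\varepsilon}$ factor for the cruder $x^{\varepsilon}$. The only thing to check is that the implicit constant still depends on the same parameters $f,\alpha,\varepsilon$, which is clear since the choice $\varepsilon'=1$ eliminates any dependence on $\varepsilon'$ and the dependence on $\varepsilon$ enters only through the final estimate comparing logarithms with powers of $x$.
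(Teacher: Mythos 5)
Your proof is correct and matches the paper's own (one-line) derivation: the paper likewise notes $\norm{f\rvert_{x}}_2 \leq \sqrt{x}\, M(f,x)$ and deduces the corollary directly from \cref{thm:OurVersion}, with the logarithmic factors absorbed into $x^{\varepsilon}$. Nothing further is needed.
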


In view of the well-known formula
\[
	\norm{\tau_k\rvert_{x}}_2^2
	= \sum_{1\leq m\leq x} \tau_k(m)^2
	\sim C_k x (\log x)^{k^2-1}
	\quad\text{as }x\to\infty
\]
with some constant $C_k$ (see, e.g., \cite[§~II.13]{sandor2006handbook}), \cref{thm:OurVersion} improves~\cref{eq:BeattyDivisorProblem} for all $k\geq 2$.

We can also prove a version of \cref{thm:OurVersion} for so-called \emph{inhomogeneous Beatty sets}, stated as \cref{thm:OurVersion:Inhomogeneous} below.
Unfortunately, we are unable to handle this case in the full generality one might hope for. We discuss this in due time in \cref{sec:Comments} below.

\medskip

Returning to our goal of improving \cref{thm:AbercrombieBanksShparlinski}, we note that for specific choices of $f$ the error term in~\cref{mainequation} may be significantly smaller. Indeed, take for instance $f$ to be the constant zero function~$\boldsymbol{0}$. Then, for any $x,\alpha>1$, we have $S_\alpha(\boldsymbol{0},x) = \alpha^{-1} S(\boldsymbol{0},x) = 0$ with no error term at all. Moreover, taking $f$ to be the identity function~$\id_{\NN}\colon\NN\to\NN$, it is easy to see that
\[
	S_\alpha(\id_{\NN},x) = \alpha^{-1} S(\id_{\NN},x) + \LandauO_{\alpha}\parentheses*{ \norm{\id_{\NN}\rvert_{x}}_2 \, x^{-\frac{1}{2}} },
\]
and the error term therein is $\gg_\alpha \norm{\id_{\NN}\rvert_{x}}_2 \, x^{-\frac{1}{2}}$ infinitely often.

Nonetheless, the error term~\cref{mainequation} in our \cref{thm:OurVersion} is optimal up to logarithmic factors in the following sense:
\begin{thm}
	\label{thm:Optimality}
	There exists an arithmetic function $\exceptionalF\colon \NN\rightarrow\set{ \pm1 }$  such that, for almost all $\alpha>1$, the inequality
	\begin{equation} \label{lowerbound}
		S_{\alpha}(\exceptionalF,x) - \alpha^{-1} S(\exceptionalF,x)
		\gg_{\alpha}  \norm{\exceptionalF\rvert_x}_2 \sqrt{\log\log x}
	\end{equation}
	holds for infinitely many $x\in\NN$.
\end{thm}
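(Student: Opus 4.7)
\textbf{Proof plan for \cref{thm:Optimality}.} My strategy is a probabilistic construction combined with Kolmogorov's law of the iterated logarithm. As a preliminary, I would express the error term in a convenient form. For every irrational $\alpha>1$, the interval $[m/\alpha,(m+1)/\alpha)$ has length $<1$ and hence contains at most one integer, giving the identity $\indic_{\B(\alpha)}(m) = \floor{(m+1)/\alpha} - \floor{m/\alpha}$. This allows one to write
\[
    E_\alpha(f,x) := S_\alpha(f,x) - \alpha^{-1} S(f,x) = \sum_{1\leq m\leq x} f(m)\, g(m,\alpha),
    \qquad g(m,\alpha) := \indic_{\B(\alpha)}(m) - \tfrac{1}{\alpha},
\]
where $\abs{g(m,\alpha)} \leq \max(1/\alpha,\, 1 - 1/\alpha) \leq 1$. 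Using $\indic_{\B(\alpha)}^2 = \indic_{\B(\alpha)}$ together with the trivial count $\sum_{m\leq N}\indic_{\B(\alpha)}(m) = N/\alpha + \LandauO_\alpha(1)$ yields
\[
    V_N(\alpha) := \sum_{1\leq m\leq N} g(m,\alpha)^2 = \frac{1}{\alpha}\parentheses*{1 - \frac{1}{\alpha}} N + \LandauO_\alpha(1),
\]
so $V_N(\alpha) \asymp_\alpha N$ for every $\alpha>1$.

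Next, let $(\epsilon_m)_{m\geq 1}$ be independent Rademacher random variables on some probability space $(\Omega,\Prob)$. For each fixed irrational $\alpha>1$, the summands $\epsilon_m g(m,\alpha)$ are independent and mean zero, uniformly bounded by~$1$, and their variances sum to $V_N(\alpha)\to\infty$ linearly in~$N$. In particular, Kolmogorov's boundedness hypothesis $1 = o\parentheses*{\sqrt{V_N(\alpha)/\log\log V_N(\alpha)}}$ is trivially met, and his law of the iterated logarithm gives
\[
    \limsup_{N\to\infty} \frac{E_\alpha(\epsilon,N)}{\sqrt{2\, V_N(\alpha) \log\log V_N(\alpha)}} = 1 \qquad \Prob\text{-a.s.}
\]
(I emphasise the one-sided form, which is needed for the \emph{signed} lower bound in~\cref{lowerbound}.) Combining with $V_N(\alpha)\asymp_\alpha N$ and $\norm{\epsilon\rvert_N}_2 = \sqrt{\floor{N}}$, this tells us that for every irrational $\alpha > 1$, $\Prob$-almost surely one has $E_\alpha(\epsilon,N) \gg_\alpha \norm{\epsilon\rvert_N}_2 \sqrt{\log\log N}$ for infinitely many $N\in\NN$.

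The final step is a quantifier swap via Fubini. The ``good event''
\[
    G := \set[\big]{ (\alpha,\omega) \in (1,\infty)\times\Omega : E_\alpha(\epsilon(\omega),N) \geq c_\alpha \norm{\epsilon(\omega)\rvert_N}_2 \sqrt{\log\log N} \text{ for infinitely many } N }
\]
(with a suitable $c_\alpha > 0$ coming from the LIL) is jointly measurable, and its $\omega$-section has full $\Prob$-measure for every irrational $\alpha>1$. Restricting to any bounded strip $(1,T]\times\Omega$ and applying Fubini, the $\alpha$-section of $G$ has full Lebesgue measure on $(1,T]$ for $\Prob$-a.e.\ $\omega$; intersecting over $T\in\NN$ extends this to $(1,\infty)$. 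Any $\omega$ in this $\Prob$-almost sure set gives a function $\exceptionalF := \epsilon(\omega) \colon \NN \to \set{\pm 1}$ with the desired property. The only real obstacle is keeping the quantifiers straight; the LIL hypotheses (bounded summands, linearly growing variance) are verified painlessly, and no analytic input beyond the classical LIL and the indicator identity above is needed.
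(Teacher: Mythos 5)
Your proposal is correct and follows essentially the same route as the paper: randomise $f$ by independent Rademacher variables, apply Kolmogorov's law of the iterated logarithm for each fixed irrational $\alpha$ (your $g(m,\alpha)=\indic_{\B(\alpha)}(m)-\alpha^{-1}$ is exactly the paper's $\psi(m/\alpha)-\psi((m+1)/\alpha)$), and swap quantifiers via Fubini. The only deviations are cosmetic --- you compute the variance sum by an exact elementary identity instead of equidistribution, and truncate $(1,\infty)$ to strips rather than working with $\lambda=\alpha^{-1}\in(0,1)$ --- and both are fine.
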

The `badly behaved' function $\exceptionalF$ in \cref{thm:Optimality} is constructed using a probabilistic argument.
In particular, our proof shows that such functions are quite abundant in a suitable measure-theoretic sense, see \cref{sec:ProofOfOptimality} below.

To the best of our knowledge, \cref{thm:Optimality} is essentially the first result of its kind in the literature on Beatty sets.
(Albeit, of course, not at all novel in number theory as a whole; see, for instance, \cite[Chapter~{III}]{halberstam1966sequences} for similar metric results.)
The result which perhaps comes closest to ours is the following theorem due to Abercrombie~\cite[Theorem~{III}]{abercrombie1995beatty-sequences}, but his set of `bad' $\alpha$ is only shown to be large in a set-theoretic, but not measure-theoretic sense.
\begin{thm}[Abercrombie]\label{thm:Abercrombie}
	Let $g\colon\RR\to\RR$ be positive, increasing and unbounded.
	Then for uncountably many numbers $\alpha>1$ there exist arbitrarily large positive $x$ such that the relation
	\[
	\abs{ S_{\alpha}(\tau,x) - \alpha^{-1}S(\tau,x) } \leq S(\tau,x) / g(x)
	\]
	does not hold.
\end{thm}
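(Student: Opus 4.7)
Our plan is a nested-interval (Cantor-type) construction yielding uncountably many $\alpha>1$, each with infinitely many ``bad'' scales $x$ witnessing the failure
\[
	\abs{ S_{\alpha}(\tau,x) - \alpha^{-1} S(\tau,x) } > S(\tau,x)/g(x).
\]
The arithmetic engine is the standard asymptotic (obtainable by a Dirichlet series computation, or by applying the hyperbola method to $\sum_{n\leq M}\tau(kn) = \#\set{ (a,b,n) : ab=kn,\, n\leq M }$)
\[
	\sum_{n\leq M} \tau(kn) = D_k\, M\log M + \LandauO_k(M), \qquad D_k = \sum_{d\mid k} \varphi(d)/d,
\]
valid for every positive integer $k$, together with the elementary fact that $D_k>1$ strictly whenever $k\geq 2$. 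Hence whenever $\alpha\in(k,k+1/M)$ we have $\floor{n\alpha}=kn$ for every $n\leq M$, so $\B(\alpha)\cap[1,kM] = \set{ k,2k,\ldots,kM }$ and
\[
	S_\alpha(\tau,kM) - \alpha^{-1} S(\tau,kM) = (D_k-1)\, M\log M + \LandauO_k(M),
\]
which dominates $S(\tau,kM)/g(kM)\asymp M\log M/g(kM)$ as soon as $M$ is taken large enough in terms of $k$ and $g$.

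The immediate obstacle is that for a fixed irrational $\alpha>k$ the condition $\alpha<k+1/M$ only holds for $M<1/(\alpha-k)$, so the above alone produces just one bad scale per $\alpha$. We overcome this by exploiting the self-similarity of Beatty sets through the continued fraction expansion $\alpha=[a_0;a_1,a_2,\ldots]$: taking the next partial quotient $a_{n+1}$ very large makes $\alpha$ essentially indistinguishable from its $n$-th convergent $p_n/q_n$ on all scales up to $\asymp a_{n+1} q_n$, so $\B(\alpha)$ agrees with $\B(p_n/q_n)$ on that range. Choosing the partial quotients carefully, each convergent $p_n/q_n$ realises a sufficiently imbalanced union-of-arithmetic-progressions structure (an analogue of the ``close-to-integer'' case above), and the divisor bias of $\B(p_n/q_n)$ transfers to $\B(\alpha)$ at a scale that grows with $n$.

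Uncountability is then obtained by a standard Cantor branching scheme. Starting from $I_0=(2,3)$, we recursively produce, for each $n$, a family of $2^n$ pairwise disjoint closed sub-intervals $I_n^{(s)}\subset I_0$ indexed by $s\in\set{0,1}^n$ (with diameters tending to zero) and scales $X_n\to\infty$, such that every $\alpha\in I_n^{(s)}$ already witnesses the bad inequality at $x=X_n$. To pass from stage $n$ to stage $n+1$, inside each $I_n^{(s)}$ we locate two disjoint sub-intervals centred on rationals whose Beatty sets are divisor-biased at a common scale $X_{n+1}\geq n+1$, as guaranteed by the rational inheritance mechanism above. The intersection $\bigcap_n\bigcup_s I_n^{(s)}$ is a Cantor set of cardinality $2^{\aleph_0}$, every member of which has the required property.

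The technically delicate point, and what we expect to be the main obstacle, is the branching step. Past the first stage, no $I_n^{(s)}$ is long enough to contain an integer, so we must rely on non-integer rationals. For $p/q$ in lowest terms, $\B(p/q)$ is a union of $q$ arithmetic progressions modulo $p$, and its divisor-sum discrepancy from $(q/p)\,S(\tau,x)$ is governed by the $\gcd$-profile $\set{ \gcd(\floor{jp/q},p) : 1\leq j\leq q }$. The heart of the proof is to show that within every sufficiently small sub-interval one can locate at least two rationals whose $\gcd$-profiles are imbalanced enough to produce the required bias at some common large scale, and to do so in a way that remains quantitatively compatible with the slowly-growing cutoff $g$; executing this book-keeping uniformly across the recursion is where the main technical labour sits.
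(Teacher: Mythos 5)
The paper itself offers no proof of \cref{thm:Abercrombie}; it is quoted verbatim from Abercrombie's 1995 paper (his Theorem~III), so there is nothing internal to compare your argument against. Judged on its own terms, your first stage is correct and complete: for $\alpha\in(k,k+1/M)$ one indeed has $\B(\alpha)\cap[1,kM]=\set{k,2k,\ldots,kM}$, the constant $D_k=\sum_{d\mid k}\varphi(d)/d$ is the right one (e.g.\ $D_2=3/2$ checks out against the hyperbola method), and $D_k>1$ gives a bias of a positive proportion of $S(\tau,kM)$ on a whole interval of $\alpha$'s. The transfer mechanism is also sound: if $0<\alpha-p/q<1/(qN)$ then $\floor{n\alpha}=\floor{np/q}$ for all $n\leq N$, with no boundary exceptions when one approaches $p/q$ from above. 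But the argument as written has a genuine gap exactly where you flag it: the claim that \emph{every} sufficiently short interval contains rationals whose Beatty sets carry a nonzero divisor bias is the entire content of the theorem beyond the first step, and you neither prove it nor reduce it to anything standard. Without that lemma each $\alpha$ you produce witnesses the failure at only finitely many scales, which does not prove the statement.

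The gap is fillable, and more cheaply than your closing paragraph suggests. Writing $\sum_{m\leq x,\,m\equiv a\ (p)}\tau(m)=C(a,p)\,p^{-1}x\log x+\LandauO_p(x)$ with $C(a,p)=p^{-1}\sum_{e\mid\gcd(a,p)}e\,\varphi(p/e)$, the Beatty bias of a reduced fraction $p/q$ is
\[
	S_{p/q}(\tau,x)-\tfrac{q}{p}S(\tau,x)
	=\frac{x\log x}{p^{2}}\Bigl((p-q)+\sum_{e\mid p,\ 1<e<p}e\,\varphi(p/e)\bigl(N_e-\tfrac{q}{e}\bigr)\Bigr)+\LandauO_p(x),
	\qquad N_e=\#\set{1\leq j\leq q: e\mid\floor{jp/q}},
\]
and the troublesome gcd-profile bookkeeping lives entirely in the intermediate divisors $1<e<p$. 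If you restrict to fractions with \emph{prime} numerator, that sum is empty and the bias is exactly $(p-q)p^{-2}x\log x>0$; such fractions are dense in $(1,\infty)$ because for any $1<u<v$ and any large prime $p$ the interval $(p/v,p/u)$ has length $\to\infty$ and hence contains an integer $q$, automatically coprime to $p$. This also disposes of your worry about compatibility with the slowly growing cutoff $g$: at each node of the Cantor tree the rational $p/q$ is fixed \emph{first}, then one waits for a scale $X$ with $g(X)$ large compared to $p^{2}/(p-q)$ and with $X$ beating the $\LandauO_p(x)$ error, and only then shrinks the interval to $(p/q,\,p/q+p/(qX)\cdot q^{-1})$; no uniformity across the recursion is needed. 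With that lemma inserted, your nested-interval scheme does prove the theorem (and the continued-fraction language becomes superfluous).
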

We remark that upon comparing \cref{thm:Abercrombie} with our above discussion of the functions~$\boldsymbol{0}$ and~$\id_{\NN}$, it would be interesting to prove variants of \cref{thm:Abercrombie} for other `arithmetically interesting' functions different from $\tau$.
Likewise, it would be desirable to be able to replace $f_0$ in \cref{thm:Optimality} with some more concrete arithmetic function.
The latter seems to be more difficult.
We leave both problems as challenges to the reader.

\section{Where the improvement comes from}

Before proceeding to the proof of \cref{thm:OurVersion}, we shall give a short informal overview of the proof of \cref{thm:AbercrombieBanksShparlinski} in~\cite{abercrombie2009arithm}, and describe where our improvement comes from.
It turns out that one can detect membership to a Beatty set using Fourier analysis (see \cref{lem:BeattyMembership} and \cref{lem:SawtoothFourier} below).\\
For technical reasons, \cite{abercrombie2009arithm} are (essentially) working with truncations of the corresponding Fourier series.
This introduces another technical obstacle of having to deal with the fact that such truncations are not precisely equal to the function one `wants' to use, but this can be overcome and does not set the limit of the final error term (see~\cite[p.~87 up to bound~(14)]{abercrombie2009arithm}).

The main problem is then bounding a certain trigonometric polynomial, namely
\begin{equation}\label{eq:Qfx:def}
	Q_{\alpha}(f,x) = \sum_{1\leq\abs{k}\leq (x+1)\sqrt{x}} \parentheses[\Big]{ \sum_{\substack{ m\leq x+1 \\ \abs{j} \leq \sqrt{x} \\ mj = k }} g_{f,x}(m) c_x(j) } \exp(2\pi i k / \alpha)
\end{equation}
where the coefficients $c_x(j)$ satisfy
\begin{equation}\label{eq:cxj:def}
	c_x(j) \ll 1/\abs{j},
\end{equation}
and $g_{f,x}(m)$ is given by
\[
	g_{f,x}(m) = \begin{cases}
		\phantom{+{}} f(1) & \text{if } m = 1, \\
		- f(m-1) + f(m) & \text{if } 2\leq m \leq x, \\
		- f(m-1) & \text{if } x < m \leq x+1.
	\end{cases}
\]
Abercrombie, Banks, and Shparlinski show that
\[
	{\int_0^1 \abs{ Q_{\lambda^{-1}}(f,x) }^2 \dd{\lambda}}
	\ll x (\log x)^3 M(f,x)^2,
\]
i.e., $Q_{\alpha}(f,x)$ is small in an $L^2$ sense.
They then use a nice elementary argument to pass from this $L^2$ bound to a similar bound for $Q_{\alpha}(f,x)$ for almost all $\alpha>1$.
However, it is this step which comes with a certain loss of efficiency (see \cite[first bound on page~87]{abercrombie2009arithm}).

This can be avoided by combining the celebrated \emph{Carleson--Hunt inequality}~\cite{hunt1968on-the-convergence} (stated below) with standard arguments based on Chebyshev's inequality and the Borel--Cantelli lemma (this is carried out in \cref{lem:SigmaBound} below).
\begin{thm*}[Carleson--Hunt]
	For any sequence $(c_{k})_{k\in\ZZ}$ of complex numbers, and any positive integer~$Y$,
	\[
		{\int_{0}^{1} \parentheses[\bigg]{ \max_{1\leq y\leq Y} \abs[\Big]{\sum_{1\leq \abs{k}\leq y}c_k \exp(2\pi i k\lambda)} }^2 \!\dd{\lambda}}
		\ll \sum_{1\leq \abs{k}\leq Y} \abs{c_k}^2.
	\]
\end{thm*}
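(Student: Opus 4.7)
The plan is to recognise this inequality as the classical $L^{2}$-boundedness of the Carleson maximal operator, specialised to trigonometric polynomials of frequencies bounded by $Y$. By Plancherel's theorem the right-hand side equals $\norm{f}_{L^{2}[0,1]}^{2}$ where $f(\lambda) := \sum_{1 \leq \abs{k} \leq Y} c_{k} \exp(2\pi i k\lambda)$; the integrand on the left-hand side is $\bigl(\sup_{1\leq y\leq Y} \abs{S_{y} f(\lambda)}\bigr)^{2}$, with $S_{y}$ denoting the partial Fourier sum of $f$ truncated to frequencies of absolute value at most $y$. What is needed is therefore an $L^{2}$-estimate for the maximal operator $f \mapsto \sup_{y} \abs{S_{y} f}$ with implied constant independent of $Y$.

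Since this is precisely the main content of Carleson's 1966 theorem (with the sharper $L^{p}$-range for $p>1$ being due to Hunt), my primary strategy is to invoke~\cite{hunt1968on-the-convergence} as a black box: applying Hunt's maximal inequality to the trigonometric polynomial~$f$ yields the stated bound immediately, with absolute implied constant. Re-deriving any genuine part of Carleson's argument here would be wildly out of proportion to the application at hand, and I would not attempt it.

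If one wished to avoid invoking such heavy machinery, the natural elementary fallback is a Rademacher--Menshov-type argument: split the range $1 \leq y \leq Y$ dyadically into $\LandauO(\log Y)$ intervals, express each partial sum $S_{y}$ as a telescoping combination of $\LandauO(\log Y)$ dyadic block projections, and bound the supremum over $y$ by the $\ell^{2}$ sum of these blocks via Plancherel. This yields a weaker inequality with an extra factor of $(\log Y)^{2}$, which would still suffice to prove a slightly weaker form of~\cref{thm:OurVersion} at the cost of two additional $\log x$ factors in the error term. The hard part, and the sole reason the full Carleson--Hunt inequality is genuinely needed in order to obtain the quantitative error term in~\cref{mainequation}, is precisely the removal of these logarithmic losses; that step requires the full time-frequency-analytic apparatus of tiles and trees in phase space, which I would not try to reproduce here.
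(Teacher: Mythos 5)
Your proposal matches the paper exactly: the Carleson--Hunt inequality is stated there as a classical result and simply cited to Hunt's 1968 paper, with no proof given, which is precisely your primary strategy of invoking it as a black box. Your correct identification of the statement as the $L^2$ maximal inequality for partial Fourier sums, and your remark that a Rademacher--Menshov argument would only yield a lossier $(\log Y)^2$ variant, are both accurate but not part of the paper's treatment.
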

Here the fact that the inner summation in~\cref{eq:Qfx:def} depends on $x$ presents a technical difficulty.
This could be overcome by using a splitting-type argument as in~\cite[p.~86]{abercrombie2009arithm} and using the decay condition~\cref{eq:cxj:def} on the `tail.'
With more refined arguments one can do even better (see the work of Aistleitner, Berkes and Seip~\cite{aistleitner2015gcdsums}).
However, instead of using the results from~\cite{aistleitner2015gcdsums}, we shall employ a recent improvement due to Lewko and {Radziwi\l\l}~\cite{lewko2017refinements}.
Using their result, stated as \cref{thm:Lewko-Radziwill} below, allows for an appreciably short proof of \cref{thm:OurVersion}.
In fact, we can even afford to skip the whole truncation argument from above altogether.

\section{Proofs}

\subsection{Preliminaries}

Membership to $\B(\alpha)$ is characterised via the following well-known lemma:
\begin{lem}
	\label{lem:BeattyMembership}
	Let $\alpha \geq 1$ be a real number.
	Then, for any integer $m > \alpha-1$,
	\[
		m \in \B(\alpha)
		\;\Longleftrightarrow\;
		m/\alpha \in \lopeninterval*{ -1/\alpha, 0 } \bmod 1.
	\]
\end{lem}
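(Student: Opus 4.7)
The plan is to unwind the definitions of both sides and verify that they describe exactly the same condition. First I would rewrite the left-hand side: by definition, $m \in \B(\alpha)$ iff $m = \lfloor n\alpha \rfloor$ for some $n \in \NN$, which translates to $m \leq n\alpha < m+1$, i.e.
\[
    \frac{m}{\alpha} \leq n < \frac{m+1}{\alpha}.
\]
Thus $m \in \B(\alpha)$ holds iff the half-open interval $[m/\alpha,\,(m+1)/\alpha)$ contains a \emph{positive} integer. Since $\alpha \geq 1$, this interval has length $1/\alpha \leq 1$, so at most one such integer can possibly exist — a minor observation that is not strictly needed for the equivalence but makes the correspondence with $n \in \NN$ clean.

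Next I would unwind the right-hand side. The congruence condition $m/\alpha \in \lopeninterval*{-1/\alpha,\,0} \bmod 1$ means, by definition of ``mod~$1$'', that there is some $n \in \ZZ$ with $m/\alpha - n \in \lopeninterval*{-1/\alpha,\,0}$; rearranging gives precisely the inequalities $m/\alpha \leq n < (m+1)/\alpha$ from the first paragraph, but with $n$ only required to lie in $\ZZ$ rather than $\NN$.

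The only nontrivial step is therefore to close the gap between $\ZZ$ and $\NN$ using the hypothesis $m > \alpha - 1$. Since $m$ is an integer and $\alpha \geq 1$, this hypothesis forces $m \geq 1$, whence $m/\alpha > 0$; any integer $n$ satisfying $n \geq m/\alpha$ must then lie in $\NN$. Conversely, every $n \in \NN$ is of course in $\ZZ$. Hence under $m > \alpha - 1$ the two interpretations of the inequalities $m/\alpha \leq n < (m+1)/\alpha$ coincide, and the biconditional follows. I anticipate no real obstacle: the proof is essentially a one-line unwinding of the definition of the Beatty set together with the observation that the positivity hypothesis on $m$ is exactly what is needed to upgrade $n \in \ZZ$ to $n \in \NN$.
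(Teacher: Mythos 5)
Your proof is correct. The paper states this lemma as well known and gives no proof, but your argument---unwinding $m=\floor{n\alpha}$ to $m/\alpha\leq n<(m+1)/\alpha$, matching this with the mod~$1$ condition for the interval $\lopeninterval*{-1/\alpha,0}$, and using $m>\alpha-1$ exactly to upgrade the integer witness $n$ from $\ZZ$ to $\NN$---is precisely the standard one-line verification the authors have in mind.
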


It appears natural to treat the mod~$1$ condition in \cref{lem:BeattyMembership} using Fourier analysis.
However, one faces difficulties below when immediately working with the Fourier series of the characteristic function $\indic_{\alpha}$ of $\lopeninterval[\big]{ -1/\alpha, 0 } \bmod 1$, because the parameter $\alpha$ appears in the corresponding Fourier coefficients. (This would bar one from applying \cref{thm:Lewko-Radziwill} below.)
The following lemma is the key to circumventing this problem (see, e.g., \cite[p.~68]{iwaniec2004analytic}).
\begin{lem}
	\label{lem:SawtoothFourier}
	Let $\psi\colon\RR\to\RR$ denote the \emph{saw-tooth function} given by $\psi(t) = t - \floor{t} - \tfrac{1}{2}$. 
	Then the following assertions hold:
	\begin{enumerate}
		\item the Fourier coefficients $c_j$ of $\psi$ are $c_0 = 0$ and $c_j = - 1 / (2\pi i j)$ for $j\neq 0$;
		\item for real $\alpha>1$, the characteristic function $\indic_{\alpha}$ of $\lopeninterval[\big]{ -1/\alpha, 0 } \bmod 1$ satisfies
		\[
			\indic_{\alpha}(t) = \alpha^{-1} + \psi(t) - \psi(t+1/\alpha).
		\]
	\end{enumerate}
\end{lem}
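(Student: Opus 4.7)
The plan is to prove parts (1) and (2) separately by direct elementary computation. For part (1), since $\psi$ is $1$-periodic and agrees with $t - \tfrac{1}{2}$ on $[0,1)$, the Fourier coefficients reduce to
\[
	c_j = \int_0^1 \parentheses{t - \tfrac{1}{2}} \exp(-2\pi i j t) \dd{t}.
\]
The case $j = 0$ is immediate. For $j \neq 0$ a single integration by parts suffices: the remaining integral of $v$ against $du$ vanishes thanks to $\exp(-2\pi i j) = 1$, while the two boundary contributions at $t = 0$ and $t = 1$ combine to give $c_j = -1/(2\pi i j)$.

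For part (2), I would substitute $\psi(t) = t - \floor{t} - \tfrac{1}{2}$ directly into the right-hand side. The linear and constant pieces cancel, leaving
\[
	\alpha^{-1} + \psi(t) - \psi(t + 1/\alpha) = \floor{t + 1/\alpha} - \floor{t}.
\]
Writing $t = \floor{t} + \theta$ with $\theta \in [0,1)$, this quantity equals $\floor{\theta + 1/\alpha}$, which is either $0$ or $1$ (since $0 < 1/\alpha < 1$ because $\alpha > 1$), and takes the value $1$ precisely when $\theta \geq 1 - 1/\alpha$. Modulo $1$, this condition describes the interval $\lopeninterval{-1/\alpha, 0}$ (up to its measure-zero boundary), which is exactly the support of $\indic_{\alpha}$.

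There is no real obstacle here; the whole argument amounts to one integration by parts and straightforward manipulation of the floor function, which is why the authors cite this as a standard fact. The only minor subtlety worth flagging is the half-open convention when reducing modulo $1$; this is immaterial for all subsequent uses of the lemma, where the relevant arguments of $\indic_{\alpha}$ are of the form $m/\alpha$ with $\alpha$ irrational, so the boundary of the interval is never hit.
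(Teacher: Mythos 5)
Your proof is correct. The paper offers no proof of this lemma at all---it is quoted as a standard fact with a pointer to Iwaniec--Kowalski---and your integration by parts for~(1) and the reduction of the right-hand side of~(2) to $\floor{t+1/\alpha}-\floor{t}$ are exactly the standard computation one would supply. Your boundary remark is worth keeping: the identity in~(2) genuinely fails at the two points $t\equiv 0$ and $t\equiv -1/\alpha \pmod 1$ (there the right-hand side $\floor{\theta+1/\alpha}$ takes the value opposite to $\indic_{\alpha}$), but since the theorems it feeds into are almost-everywhere statements in $\alpha$, one may restrict to irrational $\alpha$, for which the arguments $m/\alpha$ never hit these points.
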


\subsection{Proof of \texorpdfstring{\cref{thm:OurVersion}}{Theorem\autoref{thm:OurVersion}}}

By \cref{lem:BeattyMembership}
\[
	S_{\alpha}(f,x)
	= \sum_{\substack{ 1\leq m\leq x \\ \mathclap{ m > \alpha-1 } }} f(m) \indic_{\alpha}(m/\alpha),
\]
where $\indic_{\alpha}$ is as in \cref{lem:SawtoothFourier}.
Consequently,
\begin{equation}\label{eq:fBeattySummatory}
	S_{\alpha}(f,x)
	= \alpha^{-1} S(f,x)
	+ \varSigma_{\alpha}^{(0)}(f,x) - \varSigma_{\alpha}^{(1)}(f,x)
	+ \LandauO\parentheses{ \alpha M(f,\alpha) },
\end{equation}
where
\[
	\varSigma_{\alpha}^{(\ell)}(f,x)
	\coloneqq \sum_{1\leq m\leq x} f(m) \psi((m+\ell)/\alpha)
	\quad (\text{with } \ell \in \ZZ).
\]
In order to deal with $\varSigma_{\alpha}^{(\ell)}(f,x)$, we use the following theorem due to Lewko and {Radziwi\l\l}~\cite[Theorem~5]{lewko2017refinements}:
\begin{thm}[Lewko--{Radziwi\l\l}]
	\label{thm:Lewko-Radziwill}
	Let $\psi_*\colon\RR\to\CC$ be a complex-valued
	function with period one and Fourier coefficients $c_j$ satisfying\footnote{%
		In \cite[Theorem~5]{lewko2017refinements} the assumption that $c_0=0$ (i.e., ${\int_{0}^{1}\psi_{*}(t)\dd{t}} = 0$) is missing; however it is necessary in order for the theorem to be true.
	}
	$c_0=0$ and the decay condition $c_{j}\ll (1+\abs{j})^{-1}$. Let $(k_m)_{m\in\NN}$ be a strictly increasing sequence of positive integers and $(f_m)_{m\in\NN}$ a sequence of complex numbers.
	Then, for every $X\geq 8$,
	\[
		\int_0^1 \parentheses[\bigg]{
			\max_{1\leq x\leq X}\abs[\Big]{ \sum_{1\leq m\leq x} f_m \psi_*(k_{m}\lambda) }
		}^2 \!\dd{\lambda}
		\ll (\log\log X)^2 \sum_{1\leq m\leq X} \abs{f_m}^{2}.
	\]
\end{thm}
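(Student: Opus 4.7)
The plan is a frequency-splitting argument that applies the Carleson--Hunt inequality to a low-frequency piece and a Rademacher--Menshov-style bound to a high-frequency tail. First I would expand $\psi_*(t) = \sum_{j \neq 0} c_j \e{jt}$ in $L^2$ and substitute to write
\[
	S_x(\lambda) := \sum_{1 \leq m \leq x} f_m \psi_*(k_m\lambda) = \sum_{j \neq 0} c_j T_j^{(x)}(\lambda),
	\qquad
	T_j^{(x)}(\lambda) := \sum_{1 \leq m \leq x} f_m \e{jk_m\lambda}.
\]
Since $(k_m)_m$ is strictly increasing, for every fixed $j \neq 0$ the frequencies $(jk_m)_m$ are strictly monotone in $m$, so each $T_j^{(x)}$ is a genuine Fourier partial sum in $\lambda$. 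The Carleson--Hunt inequality stated in the excerpt then yields, for every individual $j$,
\[
	\int_0^1 \max_{1 \leq x \leq X} \abs{T_j^{(x)}(\lambda)}^2 \dd{\lambda}
	\ll \sum_{1 \leq m \leq X} \abs{f_m}^2.
\]

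Next, I would fix the threshold $J := \lceil (\log X)^2 \rceil$ and split $\psi_* = \psi_{\mathrm{lo}} + \psi_{\mathrm{hi}}$, where $\psi_{\mathrm{lo}}$ is the truncation of the Fourier series of $\psi_*$ to $0 < \abs{j} \leq J$. Writing correspondingly $S_x = S_x^{\mathrm{lo}} + S_x^{\mathrm{hi}}$, the low-frequency piece is controlled via Cauchy--Schwarz combined with $\abs{c_j} \ll (1+\abs{j})^{-1}$:
\[
	\max_x \abs{S_x^{\mathrm{lo}}(\lambda)}^2
	\leq \parentheses*{\sum_{0 < \abs{j} \leq J} \abs{c_j}} \parentheses*{\sum_{0 < \abs{j} \leq J} \abs{c_j} \max_x \abs{T_j^{(x)}(\lambda)}^2}.
\]
The first factor is $\ll \log J \ll \log\log X$. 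Integrating in $\lambda$ and applying the per-$j$ Carleson--Hunt bound termwise yields $\int \max_x \abs{S_x^{\mathrm{lo}}}^2 \dd{\lambda} \ll (\log\log X)^2 \sum_m \abs{f_m}^2$, which is already the target bound.

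For the high-frequency piece, one first notes $\norm{\psi_{\mathrm{hi}}}_2^2 = \sum_{\abs{j} > J} \abs{c_j}^2 \ll J^{-1}$, and so (together with the near-orthogonality of $\psi_{\mathrm{hi}}(k_m\lambda)$ as $m$ varies) one would expect $\int \abs{S_x^{\mathrm{hi}}}^2 \dd{\lambda} \ll J^{-1} \sum_{m \leq x} \abs{f_m}^2$ for each fixed $x$. Applying the classical Rademacher--Menshov maximal inequality to the partial sums $\sum_{m \leq x} f_m \psi_{\mathrm{hi}}(k_m\lambda)$ then costs a factor $(\log X)^2$, giving
\[
	\int \max_x \abs{S_x^{\mathrm{hi}}(\lambda)}^2 \dd{\lambda}
	\ll \frac{(\log X)^2}{J} \sum_m \abs{f_m}^2
	\ll \sum_m \abs{f_m}^2
\]
by the choice $J = \lceil (\log X)^2 \rceil$. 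Combining the two pieces yields the claimed maximal inequality.

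The hard part is the high-frequency step: the system $\set{\psi_{\mathrm{hi}}(k_m\lambda)}_m$ is not genuinely orthogonal in $L^2([0,1])$, because of possible resonances $jk_m = j'k_{m'}$ with $\abs{j}, \abs{j'} > J$, and so the fixed-$x$ $L^2$ bound one actually obtains is a GCD-sum expression, not a clean diagonal sum. Controlling these off-diagonal cross-terms — and, more generally, making the Rademacher--Menshov-type step quantitative enough to yield $(\log\log X)^2$ rather than $(\log X)^2$ in the end — is precisely where Lewko--{Radziwi\l\l}'s refinements of Gal's theorem on GCD sums are decisive and which cannot simply be read off from Carleson--Hunt alone.
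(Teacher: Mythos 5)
This statement is not proved in the paper at all: it is quoted verbatim from Lewko and Radziwi\l\l{} \cite[Theorem~5]{lewko2017refinements}, so there is no in-paper argument to compare against. Measured against the actual source, your outline does capture the genuine architecture of the proof: a frequency truncation of $\psi_*$, Carleson--Hunt applied to each dilate $T_j^{(x)}$ (which is legitimate, since for fixed $j\neq 0$ the frequencies $jk_m$ are strictly monotone in $m$, so the partial sums in $x$ really are Fourier partial sums ordered by frequency), and a Cauchy--Schwarz assembly of the low frequencies that correctly produces the two factors of $\log J \asymp \log\log X$. That half of the argument is complete and correct.

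The high-frequency half is a genuine gap, and you have flagged it yourself. The fixed-$x$ bound $\int_0^1 \abs{S_x^{\mathrm{hi}}(\lambda)}^2\,d\lambda \ll J^{-1}\sum_m \abs{f_m}^2$ does not follow from ``near-orthogonality'': expanding the square, the off-diagonal term for a pair $m\neq m'$ is $\sum_{jk_m=j'k_{m'}} c_j\overline{c_{j'}}$ over resonances with $\abs{j},\abs{j'}>J$, and writing $d=\gcd(k_m,k_{m'})$ these contributions assemble into GCD-sum expressions of the shape $\sum_{m\neq m'}\abs{f_m f_{m'}}\,d^2/(k_m k_{m'})$, which are not $\ll J^{-1}\sum_m\abs{f_m}^2$ in general; controlling them is precisely the content of the G\'al-type estimates that Lewko and Radziwi\l\l{} refine. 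Moreover, the Rademacher--Menshov step only recoups its $(\log X)^2$ loss because of the asserted $J^{-1}$ decay, so the entire high-frequency estimate rests on the unproven claim. As a blind reconstruction of the strategy your sketch is faithful, but it is not a proof; for the purposes of this paper the correct move is simply to cite \cite{lewko2017refinements} (with the hypothesis $c_0=0$ added, as the footnote explains), which is what the authors do.
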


\noindent When applied to our present situation, this immediate yields the following result.
\begin{cor}
	\label{cor:SigmaL2Bound}
	Keeping the notation from above, we have
	\begin{equation}\label{eq:SigmaBound}
		\int_0^1 \parentheses[\bigg]{
			\max_{1\leq x\leq X}\abs[\big]{ \varSigma^{(\ell)}_{\lambda^{-1}}(f,x) }
		}^2 \!\dd{\lambda}
		\ll (\log\log X)^2 \norm{f\rvert_X}_2^2.
	\end{equation}
\end{cor}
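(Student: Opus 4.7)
The plan is that the corollary should follow essentially verbatim from \cref{thm:Lewko-Radziwill}. First I would perform the substitution $\alpha = \lambda^{-1}$ in the definition of $\varSigma^{(\ell)}_{\alpha}$ to rewrite
\[
	\varSigma^{(\ell)}_{\lambda^{-1}}(f,x) = \sum_{1\leq m\leq x} f(m)\,\psi((m+\ell)\lambda),
\]
which is already in exactly the shape $\sum_{m} f_{m}\,\psi_{*}(k_{m}\lambda)$ that \cref{thm:Lewko-Radziwill} handles, with the natural identifications $\psi_{*} = \psi$, $f_{m} = f(m)$, and $k_{m} = m + \ell$.

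Next I would check the hypotheses. By part~(1) of \cref{lem:SawtoothFourier}, the saw-tooth $\psi$ has period one, vanishing mean $c_{0} = 0$, and Fourier coefficients $\abs{c_{j}} = 1/(2\pi\abs{j}) \ll (1+\abs{j})^{-1}$, so the decay condition imposed in \cref{thm:Lewko-Radziwill} is satisfied. The sequence $(k_{m})_{m\geq 1} = (m+\ell)_{m\geq 1}$ is strictly increasing, and consists of positive integers provided $\ell \geq 0$; this already covers the only values $\ell \in \set{0,1}$ that are actually used inside the proof of \cref{thm:OurVersion}. Feeding these choices into \cref{thm:Lewko-Radziwill} (with $X \geq 8$) delivers \cref{eq:SigmaBound} directly.

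Since the statement is formally for arbitrary $\ell \in \ZZ$, the only cosmetic wrinkle is the case $\ell < 0$, where the first $\abs{\ell}$ indices would produce non-positive $k_{m}$. I would peel those off and bound them trivially using $\abs{\psi} \leq 1/2$ together with the Cauchy--Schwarz inequality, obtaining a contribution of $\LandauO_{\ell}(\norm{f\rvert_{X}}_{2})$ to the quantity inside the maximum, which is harmlessly absorbed into the right-hand side; the remainder, after a re-indexing, is an instance of the $\ell \geq 0$ case already handled. I do not foresee any genuine obstacle here, or indeed anywhere in the argument: all the analytic heavy lifting has been packaged inside the cited theorem of Lewko and Radziwi\l\l{}, which is precisely why the authors allow themselves to describe the conclusion as being \emph{immediately yielded}.
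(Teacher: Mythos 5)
Your proposal is correct and follows exactly the route the paper intends: the paper offers no written proof beyond declaring the corollary an immediate application of \cref{thm:Lewko-Radziwill} with $\psi_*=\psi$, $f_m=f(m)$, $k_m=m+\ell$, which is precisely your identification, and \cref{lem:SawtoothFourier} supplies the hypotheses $c_0=0$ and $c_j\ll(1+\abs{j})^{-1}$ just as you say. Your peeling-off of the finitely many non-positive indices $k_m$ when $\ell<0$ is a detail the paper leaves implicit (and is needed for the inhomogeneous case $\ell=-\beta$), and you handle it correctly.
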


As described above, we can pass from an $L^2$ bound as in \cref{cor:SigmaL2Bound} to a metric pointwise bound at essentially no extra cost---the bound is worsened only by something slightly larger than the square root of a logarithm:
\begin{lem}
	\label{lem:SigmaBound}
	Let $\ell$ be an arbitrary integer and keep the notation from above.
	Furthermore, suppose that~\cref{eq:GrowthBound} holds.
	For every fixed $\varepsilon>0$, there is a set of $\alpha>1$  of full Lebesgue measure such that, for every $\alpha$ in that set and every $x\geq 8$,
	\[
		\abs{ \varSigma_{\alpha}^{(\ell)}(f,x) }
		\ll_{f,\alpha,\varepsilon,\ell} (\log x)^{\frac{1}{2}} (\log\log x)^{\frac{3}{2}+ \varepsilon} \norm{f\rvert_x}_2.
	\]
\end{lem}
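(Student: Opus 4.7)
The plan is to perform a standard passage from the $L^{2}$ maximal bound of \cref{cor:SigmaL2Bound} to a pointwise metric bound via dyadic decomposition, Chebyshev's inequality, and the Borel--Cantelli lemma; the growth condition~\cref{eq:GrowthBound} will be used only at the very end to interpolate between dyadic scales.

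First I would change variables to $\lambda = 1/\alpha$. Since $\alpha\mapsto 1/\alpha$ is a $C^{1}$ diffeomorphism between $(1,\infty)$ and $(0,1)$, null sets are preserved, so it suffices to prove the analogous bound for $\lambda$ in a full-measure subset of $(0,1)$. Setting $X_{r}=2^{r}$ and
\[
	M_{r}(\lambda) \coloneqq \max_{1\leq x\leq X_{r}} \abs[\big]{\varSigma^{(\ell)}_{\lambda^{-1}}(f,x)},
\]
\cref{cor:SigmaL2Bound} gives $\int_{0}^{1} M_{r}(\lambda)^{2}\dd\lambda \ll (\log r)^{2} \norm{f\rvert_{2^{r}}}_{2}^{2}$. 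Chebyshev's inequality with threshold $T_{r} \coloneqq r^{1/2}(\log r)^{3/2+\varepsilon}\norm{f\rvert_{2^{r}}}_{2}$ then produces a measure estimate of order $1/(r(\log r)^{1+2\varepsilon})$, which is summable in~$r$; Borel--Cantelli supplies a full-measure set $\Omega\subseteq(0,1)$ on which $M_{r}(\lambda)\leq T_{r}$ holds for all sufficiently large~$r$, the finitely many exceptional $r$ being absorbed into a constant depending on~$\lambda$, hence on~$\alpha$.

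Given $x\geq 8$, I then pick the unique $r\geq 3$ with $2^{r}\leq x<2^{r+1}$; then $\abs{\varSigma_{\alpha}^{(\ell)}(f,x)}\leq M_{r+1}(\lambda)\leq T_{r+1}$. The comparisons $r\asymp\log x$ and $\log r\asymp\log\log x$ convert $T_{r+1}$ into an expression of the shape $(\log x)^{1/2}(\log\log x)^{3/2+\varepsilon}\norm{f\rvert_{2^{r+1}}}_{2}$, and invoking~\cref{eq:GrowthBound} replaces $\norm{f\rvert_{2^{r+1}}}_{2}$ by $\LandauO_{f}(\norm{f\rvert_{x}}_{2})$, yielding the claimed bound.

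The main subtlety lies in this last interpolation step: the dyadic maximum $M_{r+1}$ indeed dominates $\abs{\varSigma^{(\ell)}_{\lambda^{-1}}(f,x)}$ for all $x\leq 2^{r+1}$, but turning $\norm{f\rvert_{2^{r+1}}}_{2}$ into the sharper quantity $\norm{f\rvert_{x}}_{2}$ appearing in the statement is precisely what forces the mild regularity assumption~\cref{eq:GrowthBound}. Without such a hypothesis, $\norm{f\rvert_{x}}_{2}$ could be much smaller than $\norm{f\rvert_{2^{r+1}}}_{2}$ for some $x\in[2^{r},2^{r+1})$, and $T_{r+1}$ would not translate into the desired pointwise estimate. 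All other steps are entirely routine.
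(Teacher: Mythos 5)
Your proposal is correct and follows essentially the same route as the paper: dyadic blocks $2^{r}$, Chebyshev with the threshold $r^{1/2}(\log r)^{3/2+\varepsilon}\norm{f\rvert_{2^{r}}}_{2}$ against the $L^{2}$ maximal bound of \cref{cor:SigmaL2Bound}, Borel--Cantelli, and finally~\cref{eq:GrowthBound} to replace $\norm{f\rvert_{2^{r+1}}}_{2}$ by $\norm{f\rvert_{x}}_{2}$. The only cosmetic difference is that the paper additionally intersects the exceptional sets over $\varepsilon=1/u$ to obtain a single full-measure set independent of $\varepsilon$, a step not required by the lemma as stated.
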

\begin{proof}
	Let $\varepsilon>0$ be fixed and consider the sets
	\[
		\mathscr{L}_r
		= \set[\Big]{
			0\leq\lambda<1 :
			\max_{1\leq x\leq 2^r} \abs{\varSigma_{\lambda^{-1}}^{(\ell)}(f,x)}
			> r^{\frac{1}{2}} (\log r)^{\frac{3}{2}+\varepsilon} \norm{f\rvert_{2^r}}_2
		}, \quad r = 3, 4, \ldots;
	\]
	Chebyshev's inequality shows that
	\[
		\meas(\mathscr{L}_r)
		\leq{
			\frac{1}{ r (\log r)^{3+\varepsilon} \norm{f\rvert_{2^r}}_2^2 }
			\int_0^1 \parentheses[\Big]{
				\max_{1\leq x\leq 2^r} \abs{ \varSigma_{\lambda^{-1}}^{(\ell)}(f,x) }
			}^2 \!\dd{\lambda}
		},
	\]
	where $\meas(\,\cdot\,)$ denotes the Lebesgue measure.
	\cref{cor:SigmaL2Bound} (note that $2^r\geq 8$) gives
	\[
		\int_0^1 \parentheses[\bigg]{
			\max_{1\leq x\leq 2^r} \abs{ \varSigma_{\lambda^{-1}}^{(\ell)}(f,x) }
		}^2 \!\dd{\lambda}
		\ll (\log r)^2 \norm{f\rvert_{2^r}}_2^2.
	\]
	Hence, $\meas(\mathscr{L}_r) \ll 1/r(\log r)^{1+2\varepsilon}$, so that
	\[
		\sum_{r=3}^{\infty} \meas(\mathscr{L}_r) < \infty.
	\]
	Then, the Borel--Cantelli lemma yields
	\[
		\meas\parentheses*{ \bigcap_{R=3}^\infty \bigcup_{r=R}^\infty \mathscr{L}_r } = 0.
	\]
	Hence, for every $\epsilon>0$ there is a set $\mathscr{G}_{\epsilon}$ of $\alpha>1$ of full Lebesgue measure such that for every $\alpha$ in that set we have
	\[
		\max_{1\leq x\leq 2^r} \abs{\varSigma_{\lambda^{-1}}^{(\ell)}(f,x)}
		\ll_{f,\alpha,\varepsilon,\ell} r^{\frac{1}{2}} (\log r)^{\frac{3}{2}+\varepsilon} \norm{f\rvert_{2^r}}_2.
	\]
	The dependence of the set $\mathscr{G}_{\epsilon}$ on $\epsilon$ may be removed by considering instead the set $\mathscr{G} = \bigcap_{u=1}^\infty \mathscr{G}_{1/u}$, which still has full Lebesgue measure.
	To finish the proof, note that for any $\epsilon>0$, $\alpha\in\mathscr{G}$ and $x\geq 8$ we have
	\[
		\abs{\varSigma_{\lambda^{-1}}^{(\ell)}(f,x)} \ll_{f,\alpha,\varepsilon,\ell} r^{\frac{1}{2}} (\log r)^{\frac{3}{2}+\varepsilon} \norm{f\rvert_{2^r}}_2 \ll_{f,\alpha,\varepsilon,\ell} (\log x)^{\frac{1}{2}} (\log\log x)^{\frac{3}{2}+\varepsilon} \norm{f\rvert_x}_2,
	\]
	where $2^r$ denotes the smallest power of two exceeding $x$ and the last estimate is justified by~\cref{eq:GrowthBound}.
\end{proof}

\begin{proof}[\textup{We now finish the}~proof of \cref{thm:OurVersion}]
	Indeed, the assertion of the theorem now immediately follows from~\cref{eq:fBeattySummatory} in combination with \cref{lem:SigmaBound}.
\end{proof}

\subsection{Proof of \texorpdfstring{\cref{thm:Optimality}}{Theorem\autoref{thm:Optimality}}}
\label{sec:ProofOfOptimality}

As already mentioned, in the proof we employ a probabilistic argument. To be more specific, we try to randomise the error term $\Sigma_{\alpha}^{(0)}(f,x) - \Sigma_{\alpha}^{(1)}(f,x) $ in \eqref{eq:fBeattySummatory} with respect to the function $f$. Then, we show that with positive probability this random error term satisfies the required relation \eqref{lowerbound}. 
\par A key ingredient we use in the proof is the following variant of the \emph{Law of the Iterated Logarithm} due to Kolmogorov, see  \cite[p.~435]{bingham1986variants-of-lil}.

\begin{thm}[Law of the Iterated Logarithm, Kolmogorov]
	\label{thm:LIL}
	Let $(\Omega, \Sigma, \Prob)$ be a probability space, $(X_m)_{m=1}^{\infty}$ be a sequence of independent random variables, not necessarily identically distributed, and let $S_x=\sum\limits_{m\leq x}X_m$.
	Assume $s_x^2 \coloneqq \Var[S_x] \rightarrow \infty$ and 
	\begin{equation*}
		X_m = o\parentheses*{ \frac{s_m}{\sqrt{\log\log s_m^2}} }, 
		\quad\text{as }m\to\infty \quad \text{almost surely.} 
	\end{equation*} 
	Then 
	\[
		\Prob\parentheses*{ \limsup_{x\to\infty} \frac{S_x}{\sqrt{ 2 s_x^2 \log\log s_x^2 }} = 1 } = 1 .
	\]
\end{thm}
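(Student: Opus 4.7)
The proof splits cleanly into the upper estimate $\limsup_{x\to\infty} S_x/\sqrt{2 s_x^2 \log\log s_x^2} \leq 1$ and the matching lower bound $\limsup \geq 1$, each holding almost surely. Both halves rest on Kolmogorov's exponential inequality for sums of independent mean-zero variables with $\abs{X_m}\leq M$, which reads $\Prob(S_n\geq\lambda)\leq\exp\bigl(-\tfrac{\lambda^2}{2 s_n^2}(1-\tfrac{\lambda M}{s_n^2})\bigr)$ in the range $\lambda M\leq s_n^2$, together with its companion lower bound $\Prob(S_n\geq\lambda)\geq\exp\bigl(-(1+\varepsilon)\tfrac{\lambda^2}{2 s_n^2}\bigr)$ valid in a comparable range. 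The hypothesis $X_m=o(s_m/\sqrt{\log\log s_m^2})$ is calibrated precisely so that truncating each $X_m$ at the level $M_m=s_m/\sqrt{\log\log s_m^2}$ loses a negligible amount (the discarded mass is summable in probability, so Borel--Cantelli handles the tail), while simultaneously forcing the correction factor $\lambda M/s^2$ in the exponential inequality to vanish at the target scale $\lambda\asymp\sqrt{s^2\log\log s^2}$, yielding genuinely Gaussian decay.

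For the upper bound I would fix $c>1$ and choose a geometric subsequence $(n_k)$ with $s_{n_k}^2\asymp c^k$. Kolmogorov's maximal inequality reduces control of $\max_{x\leq n_k}S_x$ to control of $S_{n_k}$; applying the exponential inequality to the truncated sum with $\lambda_k=(1+\varepsilon)\sqrt{2 s_{n_k}^2\log\log s_{n_k}^2}$ gives
\[
\Prob\Bigl(\max_{x\leq n_k}S_x\geq\lambda_k\Bigr)\ll (\log s_{n_k}^2)^{-(1+\varepsilon)^2+o(1)}\ll k^{-(1+\varepsilon')}.
\]
Summability and the first Borel--Cantelli lemma then yield $\limsup S_x/\sqrt{2 s_x^2\log\log s_x^2}\leq 1+\varepsilon$ almost surely, interpolating between consecutive $n_k$ via the monotonicity of $s_x^2$; letting $\varepsilon\downarrow 0$ along a countable sequence gives $\limsup\leq 1$.

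For the lower bound I would pick a sparser subsequence $(m_k)$ with $s_{m_{k+1}}^2/s_{m_k}^2\to\infty$, so that the block increments $Y_k=S_{m_{k+1}}-S_{m_k}$ are independent with variances $\sigma_k^2=s_{m_{k+1}}^2-s_{m_k}^2=(1-o(1))s_{m_{k+1}}^2$. The companion lower bound on $\Prob(Y_k\geq\mu_k)$ with $\mu_k=(1-\varepsilon)\sqrt{2\sigma_k^2\log\log\sigma_k^2}$---or, equivalently, a Lindeberg--Feller CLT applied to the $Y_k$, whose Lindeberg condition is exactly the hypothesis on the $X_m$---yields probabilities of order $(\log\sigma_k^2)^{-(1-\varepsilon)^2-o(1)}$, which are not summable. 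Since the $Y_k$ are independent, the \emph{second} Borel--Cantelli lemma applies and forces $Y_k\geq\mu_k$ for infinitely many $k$ almost surely. Combined with the already-proved upper bound applied to $S_{m_k}$, which by the sparsity of $(m_k)$ contributes only $o(\sqrt{s_{m_{k+1}}^2\log\log s_{m_{k+1}}^2})$, this gives $\limsup\geq 1-2\varepsilon$ almost surely, and $\varepsilon\downarrow 0$ concludes.

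The main obstacle is the coupling between the truncation level $M_m$ and the deviation scale $\lambda$: the upper-bound argument demands $\lambda M/s^2\to 0$ in Kolmogorov's inequality at $\lambda\asymp\sqrt{s^2\log\log s^2}$, which is exactly why the hypothesis is stated in the form $X_m=o(s_m/\sqrt{\log\log s_m^2})$. The converse (lower) exponential inequality in the non-i.i.d.\ setting is the more delicate ingredient, since one cannot invoke an identically-distributed CLT directly; verifying the Lindeberg condition from the smallness assumption, and checking that the resulting Gaussian approximation is valid uniformly out to the required sub-Gaussian tails, is the technical heart of the proof.
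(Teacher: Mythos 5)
The paper does not prove \cref{thm:LIL} at all: it is Kolmogorov's classical law of the iterated logarithm for independent, non-identically distributed, suitably bounded summands, imported as a black box with a citation to Bingham's survey \cite{bingham1986variants-of-lil}. So there is no in-paper argument to compare yours against; what you have written is an outline of the standard classical proof (Kolmogorov~1929, as in Lo\`eve or Petrov), and its architecture---upper bound via the exponential inequality along a geometric subsequence plus the first Borel--Cantelli lemma, lower bound via the converse exponential inequality on independent block increments plus the second Borel--Cantelli lemma, with the hypothesis $X_m=o(s_m/\sqrt{\log\log s_m^2})$ used exactly to kill the correction term $\lambda M/s^2$ at the deviation scale $\lambda\asymp\sqrt{s^2\log\log s^2}$---is the right one. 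Two points would need repair in a full write-up. First, ``Kolmogorov's maximal inequality'' (the Chebyshev-type bound $\Prob(\max_{x\leq n}\abs{S_x}\geq\lambda)\leq s_n^2/\lambda^2$) is far too weak to transfer an exponentially small tail bound from $S_{n_k}$ to $\max_{x\leq n_k}S_x$; one needs the L\'evy--Ottaviani-type inequality $\Prob(\max_{x\leq n}S_x\geq\lambda)\leq 2\,\Prob(S_n\geq\lambda-\sqrt{2s_n^2})$, or a maximal form of the exponential inequality itself. Second, your parenthetical ``or, equivalently, a Lindeberg--Feller CLT'' is not correct as stated: the thresholds $\mu_k\asymp\sigma_k\sqrt{\log\log\sigma_k^2}$ recede to infinity in units of the standard deviation, so distributional convergence alone says nothing about $\Prob(Y_k\geq\mu_k)$; the converse (lower) exponential inequality, which you do name as the primary tool and correctly identify as the delicate ingredient, is genuinely required. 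Minor further points: after interpolating between consecutive $n_k$ one must let the geometric ratio $c$ tend to $1$ as well as $\varepsilon\to 0$; and since the boundedness hypothesis here is only almost sure, the truncated variables must be recentred and their variance compared with $s_x^2$ before the exponential inequality applies. With those repairs the sketch is a correct proof outline of the quoted theorem.
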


We now consider a probability space $(\Omega, \Sigma, \Prob)$ and  a sequence $(f_m)_{m=1}^{\infty}$ of independent random variables $f_m\colon \Omega\rightarrow \set{\pm1}$ such that
\[
	\Prob( f_m = +1) = \Prob(f_m= -1) = \tfrac{1}{2} \quad (m=1,2,\ldots).
\]
We wish to apply \cref{thm:LIL}  to the sequence of independent random variables 
\[
	X_{m,\lambda}\colon \Omega \rightarrow \RR, \quad
	X_{m,\lambda}(\omega) =
	f_m(\omega) \parentheses{\psi(m\lambda) -\psi((m+1)\lambda)} .
\]
For each $\lambda\in(0,1)$, the random variable $X_{m,\lambda}$ has mean value $\Expectation[X_{m,\lambda}] = 0$ and variance $\Var[X_{m,\lambda}] = (\psi(m\lambda) - \psi((m+1)\lambda))^2$.
Since the $(X_{m,\lambda})_{m=1}^{\infty}$ are independent, we find that for all $x\geq 1$, the random variable
\[
	S_{x,\lambda}\colon\Omega\to\RR,\quad \hspace{3mm}
	S_{x,\lambda}(\omega)  = \sum_{1\leq m\leq x} X_{m,\lambda}(\omega)
\]
has variance
\begin{align*}
	s_x^2(\lambda) &
	\coloneqq \Var[S_{x,\lambda}]
	= \Var\brackets[\Big]{ \sum_{1\leq m\leq x} X_{m,\lambda} }
	= \sum_{1\leq m\leq x} \Var[X_{m,\lambda}] \\
	&= \sum_{1\leq m\leq x} (\psi(m\lambda)- \psi((m+1)\lambda) )^2.
\end{align*}
From now on we restrict ourselves to irrational values of $\lambda\in (0,1)$. Then the sequence $(m\lambda)_{m=1}^\infty$ is uniformly distributed modulo $1$, and therefore
\begin{equation}\label{eq:sx2:Asymptotics}
	\lim_{x\to\infty} \frac{s_x^2(\lambda)}{x}
	= {\int_0^1 (\psi(t)-\psi(t+\lambda))^2 \dd{t}}
	= {\int_0^1 (\boldsymbol{1}_{\lambda^{-1}}(t) - \lambda)^2 \dd{t}}
	= \lambda(1-\lambda).
\end{equation}
(For this and more background on uniform distribution theory, we refer the reader to~\cite{kuipers1974uniformdistribution}.)
In particular $s_x^2(\lambda) \to \infty$ as $x\to\infty$.
\cref{thm:LIL} now yields that for all $\lambda\in (0,1)\setminus \QQ$, we have 
\[
	\limsup_{x\to\infty} \frac{ S_{x,\lambda}(\omega) }{ \sqrt{2s_x^2(\lambda)\log\log s_x^2(\lambda)} } = 1 \quad \text{for $\Prob$-almost all~} \omega\in\Omega.
\]
Employing Fubini's theorem, we can exchange the order of $\lambda$ and $\omega$; that is, we obtain the statement that for $\Prob$-almost all $\omega\in\Omega$
\begin{equation}\label{eq:LIL:Applied2}
	\limsup_{x\to\infty} \frac{ S_{x,\lambda}(\omega) }{ \sqrt{2s_x^2(\lambda)\log\log s_x^2(\lambda)} } = 1 \quad \text{for Lebesgue-almost all~} \lambda\in (0,1).
\end{equation}
Pick one such $\omega_0\in\Omega$ and let $\exceptionalF\colon\NN\to\set{\pm1}$ be the function given by $\exceptionalF(m) = f_m(\omega_0)$.
Then, writing $\lambda = \alpha^{-1}$, equation~\cref{eq:fBeattySummatory} gives
\[
	S_\alpha(\exceptionalF,x) - \alpha^{-1} S(\exceptionalF,x)
	=
	S_{x,\lambda}(\omega)
	+ \LandauO(\alpha)
\]
for every $\alpha>1$. Now combining~\cref{eq:LIL:Applied2} with~\cref{eq:sx2:Asymptotics}, we obtain the conclusion of \cref{thm:Optimality}.

\section{Comments}
\label{sec:Comments}

We close with a short comment on the scope of the method.
Given $\alpha\geq 1$ and a non-negative real number $\beta$ it is also customary to consider the so-called \emph{inhomogeneous} Beatty set $\B(\alpha,\beta) = \set{ \floor{ n\alpha+\beta } : n\in\NN }$.
Consequently, given some arithmetic functions $f\colon\NN\to\CC$, one may wish to compare
\begin{equation}\label{eq:Sfx:Inhom:Def}
	S_{\alpha,\beta}(f,x) =
	\sum_{\substack{ 1\leq m\leq x \\ \mathclap{ m\in \B(\alpha,\beta) } }} f(m)
	\quad\text{with}\quad
	\sum_{1\leq m\leq x} f(m).
\end{equation}

\begin{question}\label{question:InhomogeneousResult}
	Can a corresponding version of \cref{thm:OurVersion} be obtained for inhomogeneous Beatty sets?
\end{question}

In fact, many of the previously cited references concerned with particular choices of~$f$ also treat this more general setting, at essentially no extra effort.
However, it seems that the present method (or the method from~\cite{abercrombie2009arithm} for that matter) does not readily generalise to this modified setup, at least when~$\beta$ is arbitrary.

Nonetheless, when $\beta\geq 0$ is an integer, the argument below works with only minor changes.
To see this, just observe that for inhomogeneous Beatty sets the characterisation in \cref{lem:BeattyMembership} takes the form
\[
	m \in \B(\alpha,\beta)
	\;\Longleftrightarrow\;
	(m-\beta)/\alpha \in \lopeninterval*{ -1/\alpha, 0 } \bmod 1
\]
for any integer $m>\alpha+\beta-1$.
Hence,
\[
	S_{\alpha,\beta}(f,x) = \sum_{\substack{ 1\leq m\leq x \\ \mathclap{ m > \alpha+\beta-1 } }} f(m) \indic_{\alpha}((m-\beta)/\alpha).
\]
From this one easily deduces that
\[
	S_{\alpha,\beta}(f,x)
	= \alpha^{-1} S(f,x)
	+ \varSigma_{\alpha}^{(-\beta)}(f,x) - \varSigma_{\alpha}^{(1-\beta)}(f,x)
	+ \LandauO\parentheses{ (\alpha+\beta) M(f,\alpha+\beta) }.
\]
(See~\cref{eq:fBeattySummatory}.)
Using the fact that $\beta$ is an integer, we obtain the following result as a partial answer towards \cref{question:InhomogeneousResult}.
\begin{thm}
	\label{thm:OurVersion:Inhomogeneous}
	Fix $\varepsilon>0$, and let $\beta$ be a non-negative integer.
	Let $f\colon\NN\to\CC$ be any arithmetic function satisfying~\cref{eq:GrowthBound}.
	Keep the notation from~\cref{eq:Mfx:Def} and~\cref{eq:Sfx:Inhom:Def}.
	Then, for almost all $\alpha>1$ with respect to the Lebesgue measure (depending on $\beta$ and $f$), one has
	\[
		S_{\alpha,\beta}(f,x)
		= \alpha^{-1} S(f,x)
		+ \LandauO_{f,\alpha,\beta,\varepsilon}\parentheses*{
			\norm{f\rvert_{x}}_2
			(\log x)^{\frac{1}{2}}
			(\log\log x)^{\frac{3}{2}+\varepsilon}
		},
	\]
	uniformly for $x\geq 8$.
\end{thm}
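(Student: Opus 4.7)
The plan is to essentially repeat the proof of \cref{thm:OurVersion}, taking as its starting point the identity
\[
	S_{\alpha,\beta}(f,x)
	= \alpha^{-1} S(f,x)
	+ \varSigma_{\alpha}^{(-\beta)}(f,x) - \varSigma_{\alpha}^{(1-\beta)}(f,x)
	+ \LandauO\parentheses{ (\alpha+\beta) M(f,\alpha+\beta) }
\]
already derived in the paragraphs preceding the theorem. Since $\beta\in\ZZ_{\geq 0}$, the shifts $-\beta$ and $1-\beta$ are integers, so the two $\varSigma$-sums fall directly within the scope of \cref{lem:SigmaBound}; no further Fourier-analytic work is needed. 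This integrality assumption is precisely what makes the argument go through: for non-integer $\beta$ the summand $\psi((m-\beta)/\alpha)$ would equal $\psi(m/\alpha+\gamma(\alpha,\beta))$ for some $\alpha$-dependent translate, and could no longer be written in the form $\psi_{*}(k_m\lambda)$ for a fixed one-periodic function $\psi_{*}$ demanded by \cref{thm:Lewko-Radziwill}. This is the genuine obstruction behind \cref{question:InhomogeneousResult} and the reason why one must restrict to integer~$\beta$ here.

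With the identity above in hand, I would apply \cref{lem:SigmaBound} twice, once with $\ell=-\beta$ and once with $\ell=1-\beta$. Each application produces a set of $\alpha>1$ of full Lebesgue measure (depending on $f$, $\varepsilon$, and $\beta$) on which the bound
\[
	\abs{ \varSigma_{\alpha}^{(\ell)}(f,x) }
	\ll_{f,\alpha,\varepsilon,\beta}
	\norm{f\rvert_x}_2 (\log x)^{\frac{1}{2}} (\log\log x)^{\frac{3}{2}+\varepsilon}
\]
holds uniformly for $x\geq 8$. The intersection of these two full-measure sets still has full Lebesgue measure, and on it the triangle inequality yields the desired bound on $\varSigma_{\alpha}^{(-\beta)}(f,x)-\varSigma_{\alpha}^{(1-\beta)}(f,x)$ of the same order.

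Finally, the leftover term $\LandauO((\alpha+\beta)M(f,\alpha+\beta))$ is a quantity $\LandauO_{f,\alpha,\beta}(1)$ independent of~$x$; after ruling out the trivial case $f\equiv 0$ (in which the theorem is vacuous), it is absorbed into the implicit constant of the main error, as $\norm{f\rvert_{x}}_{2}(\log x)^{1/2}(\log\log x)^{3/2+\varepsilon}$ is bounded below by a positive constant depending only on $f$ as soon as $x$ exceeds the smallest index at which $f$ is nonzero. Substituting the resulting estimates into the displayed identity yields precisely the asymptotic formula claimed in \cref{thm:OurVersion:Inhomogeneous}, with no step more delicate than routine bookkeeping of the dependencies of the implicit constants.
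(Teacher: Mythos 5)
Your proposal is correct and follows essentially the same route as the paper: the authors likewise derive the displayed identity from the inhomogeneous analogue of \cref{lem:BeattyMembership}, note that integrality of $\beta$ makes $-\beta$ and $1-\beta$ admissible values of $\ell$ in \cref{lem:SigmaBound}, and conclude by applying that lemma twice and intersecting the two full-measure sets. Your extra remarks on why non-integral $\beta$ breaks the reduction to \cref{thm:Lewko-Radziwill} and on absorbing the $x$-independent error term match the paper's own discussion.
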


It should be noted here that for non-negative, non-integral~$\beta$ our approach fails, because we lack a suitable analogue of \cref{cor:SigmaL2Bound}.
Moreover, when arguing similarly as in~\cite{abercrombie2009arithm}, one faces problems when trying to calculate the $L^2$~norm of the corresponding analogue of~\cref{eq:Qfx:def}, because the shift $\beta\notin\ZZ$ breaks orthogonality of the exponentials which show up after making the necessary adjustments.

\section*{Acknowledgements}
The authors would like to thank Christoph~Aistleitner for insightful conversations leading to the proof of \cref{thm:Optimality} and the TU~Graz for providing excellent working conditions.


\end{document}